\documentclass[12pt]{article}
\usepackage{multicol}
\usepackage{stmaryrd}
\usepackage{a4}
\usepackage{amsthm}
\usepackage{amsmath}
\usepackage{amssymb}
\usepackage{amsfonts}
\usepackage{cite}
\usepackage{epsfig}
\usepackage{comment}
\usepackage{xcolor}
\usepackage{ifpdf}
\ifpdf
  \DeclareGraphicsRule{*}{mps}{*}{}
\fi

\setcounter{MaxMatrixCols}{20}
\usepackage{multicol}


\newtheorem{theorem}{Theorem}

\newtheorem{lemma}[theorem]{Lemma}

\newcommand\NN{{\mathbb N}}
\newcommand\RR{{\mathbb R}}
\newcommand\EE{{\mathbb E}}

\newcommand{\supp}[1]{\mbox{supp}\left(#1\right)}
\def\cA{{\mathcal A}}

\newcommand{\unlab}[2]{\left\llbracket #1\right\rrbracket_{#2}}
\DeclareMathOperator{\im}{Im}

\begin{document}
\title{Characterization of quasirandom permutations by a pattern sum\thanks{The first, second, fourth and sixth authors were supported by the European Research Council (ERC) under the European Union's Horizon 2020 research and innovation programme (grant agreement No 648509). The second and sixth authors were also supported by the MUNI Award in Science and Humanities of the Grant Agency of Masaryk university. The third author was supported by the Leverhulme Trust Early Career Fellowship ECF-2018-534. The fifth author was supported by the European Union's Horizon 2020 research and innovation programme under the Marie Curie grant agreement No 75242. This publication reflects only its authors' view; the European Research Council Executive Agency is not responsible for any use that may be made of the information it contains.}}
\author{Timothy F.~N. Chan\thanks{School of Mathematics, Monash University, Melbourne 3800, Australia, and Mathematics Institute, University of Warwick, Coventry CV4 7AL, UK. Email: {\tt timothy.chan@monash.edu}.}\and
        Daniel Kr\'al'\thanks{Faculty of Informatics, Masaryk University, Botanick\'a 68A, 602 00 Brno, Czech Republic, and Mathematics Institute, DIMAP and Department of Computer Science, University of Warwick, Coventry CV4 7AL, UK. E-mail: {\tt dkral@fi.muni.cz}.}\and
	Jonathan A. Noel\thanks{Mathematics Institute and DIMAP, University of Warwick, Coventry CV4 7AL, UK. E-mail: {\tt J.Noel@warwick.ac.uk}.}\and
	Yanitsa Pehova\thanks{Mathematics Institute, University of Warwick, Coventry CV4 7AL, UK. E-mail: {\tt Y.Pehova@warwick.ac.uk}.}\and
	Maryam Sharifzadeh\thanks{Mathematics Institute, University of Warwick, Coventry CV4 7AL, UK. E-mail: {\tt m.sharifzadeh@warwick.ac.uk}.}\and
        Jan Volec\thanks{Department of Mathematics, Faculty of Nuclear Sciences and Physical Engineering, Czech Technical University in Prague, Trojanova 13, 120 00 Prague, Czech Republic. Previous affiliation: Faculty of Informatics, Masaryk University, Botanick\'a 68A, 602 00 Brno, Czech Republic. E-mail: {\tt jan@ucw.cz}.}
}
\date{}
\maketitle
\begin{abstract}
It is known that a sequence $\{\Pi_i\}_{i\in\NN}$ of permutations is quasirandom
if and only if the pattern density of every $4$-point permutation in $\Pi_i$ converges to $1/24$.
We show that there is a set $S$ of $4$-point permutations such that
the sum of the pattern densities of the permutations from $S$ in the permutations $\Pi_i$ converges to $\lvert S\rvert/24$
if and only if the sequence is quasirandom. Moreover, we are able to completely characterize the sets $S$ with this property.
In particular, there are exactly ten such sets, the smallest of which has cardinality eight.
\end{abstract}

\section{Introduction}

A combinatorial object is said to be quasirandom
if it looks like a truly random object of the same kind.
The theory of \emph{quasirandom graphs} can be traced back to the work of R\"odl~\cite{Rod86}, Thomason~\cite{Tho87} and
Chung, Graham and Wilson~\cite{ChuGW89} from the 1980s.
It turned out that several diverse properties of random graphs
involving subgraph density, edge distribution and eigenvalues of the adjacency matrix
are satisfied by a large graph if and only if one of them is.
In particular, if the edge density of a large graph $G$ is $p+o(1)$ and
the density of cycles of length four is $p^4+o(1)$,
then the density of all subgraphs is close to their expected density in a random graph.
Results of similar kind have been obtained for many other types of combinatorial objects,
for example 
groups~\cite{Gow08},
hypergraphs~\cite{ChuG90,Gow06,Gow07,HavT89,KohRS02},
set systems~\cite{ChuG91s},
subsets of integers~\cite{ChuG92} and
tournaments~\cite{BucLSSXX,ChuG91,CorR17,HanKKMPSV19}.
In this paper, we will be concerned with quasirandomness of permutations as studied in~\cite{Coo04,KraP13}.

To state our results precisely, we need to fix some notation.
We use $[n]$ to denote the set $\{1,\ldots,n\}$. A \emph{permutation of order $k$}, 
or briefly a \emph{$k$-permutation}, is a bijection from $[k]$ to $[k]$.
The order of a permutation $\pi$ is denoted by $\lvert\pi\rvert$.
If $A=\{a_1,\ldots,a_{\ell}\}\subseteq [k]$, $a_1<\cdots<a_{\ell}$,
then the subpermutation $\pi$ \emph{induced} by $A$
is the unique permutation $\pi'$ of order $|A|=\ell$ such that 
$\pi'(i)<\pi'(j)$ if and only if $\pi(a_i)<\pi(a_j)$ for every $i,j\in [\ell]$.
Subpermutations are also often referred to as patterns.
If $\pi$ and $\Pi$ are two permutations, 
then the \emph{pattern density} of $\pi$ in $\Pi$, which is denoted by $d(\pi,\Pi)$,
is the probability that 
the subpermutation of $\Pi$ induced by a random $\lvert\pi\rvert$-element subset of $[n]$ is $\pi$,
where $n=\lvert\Pi\rvert$.
If $\lvert\Pi\rvert<\lvert\pi\rvert$, then we set $d(\pi,\Pi)$ to be zero.
We often refer to pattern density simply as \emph{density} in what follows. Finally,
a sequence $\{\Pi_i\}_{i\in\NN}$ of permutations is \emph{quasirandom} if 
\[\lim_{i\to\infty}d(\pi,\Pi_i)=\frac{1}{\lvert\pi\rvert!}\]
for every permutation $\pi$.

Our research is motivated by the following question of Graham (see \cite[page 141]{Coo04}):
\emph{Is there an integer $k$ such that a sequence $\{\Pi_i\}_{i\in\NN}$ of permutations is quasirandom if and only if
\[\lim_{i\to\infty}d(\pi,\Pi_i)=\frac{1}{k!}\]
for every $k$-permutation $\pi$?}
The question was answered affirmatively in~\cite{KraP13} by establishing that $k=4$ has this property.
It is interesting that
this statement is equivalent to a result in statistics on non-parametric independence tests by Yanagimoto~\cite{Yan70},
which improved an older result by Hoeffding~\cite{Hoe48}.
\begin{theorem}
\label{thm:gafa}
A sequence $\{\Pi_i\}_{i\in\NN}$ of permutations is quasirandom if and only if
\[\lim_{i\to\infty}d(\pi,\Pi_i)=\frac{1}{4!}\]
for every $4$-permutation $\pi$.
\end{theorem}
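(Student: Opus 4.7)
The \emph{only if} direction is immediate from the definition of quasirandomness, so the task is the \emph{if} direction. My plan is to work in the framework of permutation limits (permutons), which will let me apply compactness and analytic tools. A permuton is a Borel probability measure on $[0,1]^2$ with uniform marginals, and the pattern densities $d(\pi,\cdot)$ extend continuously to permutons. By compactness, every subsequence of $\{\Pi_i\}_{i\in\NN}$ has a further subsequence converging to some permuton $\mu$, so it suffices to show: any permuton $\mu$ with $d(\pi,\mu)=1/24$ for every $4$-permutation $\pi$ must equal the uniform permuton $\lambda$, i.e., Lebesgue measure on $[0,1]^2$.

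As a preliminary bootstrap, the downward-averaging identity
\[
d(\sigma,\mu)\;=\;\sum_{|\pi|=k+1} d(\sigma,\pi)\, d(\pi,\mu) \qquad(|\sigma|=k)
\]
propagates the hypothesis $d(\pi,\mu)=1/24$ to $d(\sigma,\mu)=1/|\sigma|!$ for every $\sigma$ with $|\sigma|\le 4$. Thus $\mu$ and $\lambda$ share \emph{all} pattern densities of order at most four, and in particular every functional that is a polynomial in these densities takes the same value at $\mu$ as at $\lambda$.

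The crux is then to exhibit a continuous functional $\Psi$ on permutons with (i) $\Psi(\mu)\ge 0$ and equality iff $\mu=\lambda$, and (ii) $\Psi(\mu)$ expressible solely through pattern densities of order $\le 4$. Given such $\Psi$, the bootstrap yields $\Psi(\mu)=\Psi(\lambda)=0$ and hence $\mu=\lambda$. A natural candidate is a Hoeffding-type $L^2$ discrepancy of the copula,
\[
\Psi(\mu)\;:=\;\int_{[0,1]^2}\bigl(F_\mu(x,y)-xy\bigr)^2\,dx\,dy, \qquad F_\mu(x,y):=\mu([0,x]\times[0,y]),
\]
which obviously satisfies (i). For (ii), one expands the square and interprets each resulting integral as a rank-event probability: for example, $\int F_\mu^2\,dx\,dy=\Pr\bigl[\max(U_1,U_2)\le U_3,\ \max(V_1,V_2)\le V_4\bigr]$ with four i.i.d.\ samples $(U_i,V_i)\sim\mu$, which is a linear combination of the $d(\pi,\mu)$ for $|\pi|=4$.

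The main obstacle lies in establishing (ii) for the cross term $\int F_\mu\cdot xy\,dx\,dy$: its naive reduction to a rank-event requires five independent $\mu$-samples, since one must realise the deterministic factor $xy$ as $\EE\bigl[\mathbf{1}_{A\le x}\mathbf{1}_{B\le y}\bigr]$ with two fresh uniform witnesses $A,B$, in addition to three others required elsewhere. Any attempt to save a sample by reusing a coupled pair of $\mu$-marginals introduces precisely the dependence between the two coordinates of $\mu$ that one is trying to detect. Overcoming this difficulty will require either an ingenious substitution using identities between size-$\le 4$ patterns (possibly invoking the bootstrap from paragraph two) or replacing $\Psi$ altogether by a genuinely degree-$4$ dependence certificate; the existence of such a certificate in the analogous statistical setting is guaranteed by the Yanagimoto theorem mentioned in the introduction, and I would attempt to transplant its proof into the permuton framework.
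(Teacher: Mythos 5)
Your reduction to permutons via compactness, the downward-averaging bootstrap, and the strategy of finding a nonnegative functional $\Psi$ that vanishes only at the uniform permuton and is expressible through $4$-point densities are all sound; this is the correct shape for a proof. But the gap you flag at the end is genuine, not cosmetic: your candidate $\Psi$ is not a polynomial in the $\le 4$-point densities. The cross term $\int F_\mu(x,y)\,xy\,dx\,dy$ requires five mutually independent $\mu$-marginals --- the two Lebesgue integration coordinates, the pair $(X,Y)$ behind $F_\mu$, and the two fresh witnesses realising the factors $x$ and $y$ --- and since the two coordinates of one $\mu$-sample are not independent, one cannot extract four mutually independent marginals from only three extra samples. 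Trading the integrating measure for $d\mu$ does not help either: the cross term then becomes a $4$-sample event, but $\int (xy)^2\,d\mu = \EE_\mu[X^2Y^2]$ inherits the same degree-$5$ defect. So you have correctly identified that you have no certificate, and the argument does not close.

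For comparison, the present paper does not re-prove Theorem~\ref{thm:gafa}; it cites it to \cite{KraP13} and the Yanagimoto/Hoeffding line. But Theorem~\ref{thm:set8a} --- that the sum of densities over the Bergsma--Dassios set $\{1234,1243,2134,2143,3412,3421,4312,4321\}$ is at least $1/3$ with equality exactly for the uniform permuton --- is strictly stronger, and its proof is exactly the ``genuinely degree-$4$ dependence certificate'' your last paragraph calls for, transplanted into the permuton framework. The mechanism differs structurally from an $L^2$ copula discrepancy: one writes a flag-algebra sum-of-squares over permutons rooted at a pair of points, tightness of the SOS forces the rooted element $A_1$ (and its reflection $A_2$) to vanish $\mu$-almost surely, Lemma~\ref{lm:A1} converts this into the rectangle identity $\mu([x_1,x_2]\times[y_1,y_2]) = (x_2-x_1)(y_2-y_1)$ for all pairs of support points, and Lemma~\ref{lm:supp} upgrades the rectangle identity to $\mu$ being Lebesgue. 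The key move that sidesteps your degree count is to centre the discrepancy on \emph{two} $\mu$-points (the two roots) rather than on the corner $(0,0)$ and a single $\mu$-point; with two roots and two free samples the relevant expression already sits at degree four after unlabeling, and the SOS multipliers are produced by solving an SDP rather than guessed as a closed-form integral.
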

The statement of Theorem~\ref{thm:gafa} does not hold for $3$-permutations~\cite{CooP08}, also see~\cite{KraP13};
i.e., there exists a non-quasirandom sequence of permutations in which the density of every $3$-permutation converges to $1/3!$. 

Theorem~\ref{thm:gafa} says that
if the limit densities of \emph{all} $4$-permutations in a sequence are equal to $1/4!$,
then the sequence is quasirandom.
Hence, it is natural to ask
whether it is possible to replace the set of all $4$-permutations in the statement of Theorem~\ref{thm:gafa} with a smaller set.
Inspecting the proof given in~\cite{KraP13},
Zhang~\cite{Zha} observed that there exists a $16$-element set of $4$-permutations with this property.
We identify several $8$-element sets that have this property.
In fact, the sets $S$ that we identify have the stronger property that
fixing the sum of densities of elements of $S$ is enough to force quasirandomness; i.e. it 
is not necessary to fix the density of each individual element of $S$.
This stronger property was studied in statistics by Bergsma and Dassios~\cite{BerD14}
who also identified the first of the $8$-element sets listed in Theorem~\ref{thm:main} below.
Formally,
we say that a set $S$ of $k$-permutations is \emph{$\Sigma$-forcing} if the following holds:
a sequence $\{\Pi_i\}_{i\in\NN}$ of permutations is quasirandom if and only if
\[\lim_{i\to\infty}\sum_{\pi\in S}d(\pi,\Pi_i)=\frac{\lvert S\rvert}{k!}.\]
Our main theorem is the characterization of all $\Sigma$-forcing sets of $4$-permutations.

\begin{theorem}
\label{thm:main}
Let $S$ be a set of $4$-permutations.
The set $S$ is $\Sigma$-forcing if and only if
$S$ is one of the following five sets
\begin{itemize}
\item $\{1234,1243,2134,2143,3412,3421,4312,4321\}$,
\item $\{1234,1432,2143,2341,3214,3412,4123,4321\}$,
\item $\{1324,1342,2413,2431,3124,3142,4213,4231\}$,
\item $\{1324,1423,2314,2413,3142,3241,4132,4231\}$,
\item $\{1234,1243,1432,2134,2143,2341,3214,3412,3421,4123,4312,4321\}$, or
\end{itemize}
their complements.
\end{theorem}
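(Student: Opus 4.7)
The plan is to pass to the limit setting. Any sequence $\{\Pi_i\}_{i\in\NN}$ of permutations with $\lvert\Pi_i\rvert\to\infty$ has a subsequence converging to a \emph{permuton} $\mu$, a Borel probability measure on $[0,1]^2$ with uniform marginals, and $d(\pi,\Pi_i)\to d(\pi,\mu)$ for every pattern $\pi$; the sequence is quasirandom precisely when $\mu$ is the Lebesgue measure $\lambda$. Hence Theorem~\ref{thm:main} reduces to characterizing those sets $S$ for which every permuton $\mu$ satisfying $\sum_{\pi\in S}d(\pi,\mu)=\lvert S\rvert/24$ equals $\lambda$.

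Before attacking either direction I would exploit natural symmetries. The symmetry group of the unit square (of order $8$) acts on permutons by rotation and reflection, and correspondingly on $4$-permutations by reversal, pattern-complementation, inversion, and their compositions; the $\Sigma$-forcing property is preserved by this action. It is also preserved by passing from $S$ to its complement in the set of all $4$-permutations, since this replaces $\sum_{\pi\in S}d(\pi,\mu)$ by $1-\sum_{\pi\in S}d(\pi,\mu)$. These $16$ operations reduce both directions of the theorem to a small number of orbit representatives among the ten listed sets.

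For the sufficiency direction, for each orbit representative $S$ in the list I would show that $\sum_{\pi\in S}d(\pi,\mu)-\lvert S\rvert/24$ is non-negative and vanishes only at $\mu=\lambda$, by producing a sum-of-squares identity in terms of integrals of the fluctuation $h(x,y):=\mu([0,x]\times[0,y])-xy$. For the Bergsma--Dassios orbit $\{1234,1243,2134,2143,3412,3421,4312,4321\}$ this is essentially the known non-negativity of the Bergsma--Dassios $\tau^{*}$ independence statistic; for the remaining orbit representatives I would seek analogous identities, most naturally discovered and certified by a flag-algebra (semidefinite programming) computation on the algebra of $4$-permutation patterns. Producing such sum-of-squares certificates, each tight only at $\lambda$, is the main technical hurdle I anticipate.

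For the necessity direction, for each candidate $S$ not in the orbit (under the $16$ operations above) of any listed set I would exhibit a specific non-uniform permuton $\mu_S$ with $\sum_{\pi\in S}d(\pi,\mu_S)=\lvert S\rvert/24$. Useful test permutons include the ``X''-permuton supported on the two diagonals of the unit square, permutons concentrated on the graphs of monotone step functions, and piecewise-constant block permutons; each such $\mu$ imposes a linear constraint on $S$, and a carefully chosen finite collection of these constraints should cut out precisely the union of the $16$ orbits of the listed sets. After the symmetry reduction of the second paragraph only finitely many orbits remain to check, and this enumeration may be finished by hand or by a short computer search.
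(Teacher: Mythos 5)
Your overall plan---pass to permutons, quotient by the 16 square/complement symmetries, prove the five listed sets and their complements force uniformity via flag-algebra sum-of-squares certificates, and refute every other set with explicit counter-permutons---matches the high-level architecture of the paper. However, both directions are missing the ideas that make them actually go through. On the sufficiency side you write that the certificates should be ``tight only at $\lambda$,'' but a semidefinite certificate by itself only yields the inequality $\sum_{\pi\in S}d(\pi,\mu)\ge|S|/24$; it says nothing about \emph{which} permutons attain equality. The paper has to do genuine additional work here: the positive-definite matrices $M$ are chosen so that equality forces specific rooted linear combinations $A_1,A_2$ to vanish almost surely, and then two analytic lemmas translate this vanishing into the pointwise condition $\mu([x_1,x_2]\times[y_1,y_2])=(x_2-x_1)(y_2-y_1)$ for support points, after which a third lemma about supports of permutons upgrades this to $\mu=\lambda$. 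You flag this as a ``hurdle'' but offer no mechanism; this is the heart of the argument, not a routine verification.

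On the necessity side the gap is more serious. ``A carefully chosen finite collection of constraints should cut out precisely the union of the $16$ orbits,'' followed by a hand or computer search, does not explain how to search over the roughly $2^{24}/16\approx 10^6$ orbits, and there is no a priori reason a fixed small family of test permutons determines the answer. What the paper does instead is perturb the uniform permuton inside a parametrized family of step permutons $\mu[A+\sum x_{ij}B^{ij}]$ and compute derivatives of $\sum_{\pi\in S}d(\pi,\cdot)$ at the origin. The first-order analysis shows that the gradient vanishes (for all $n$) if and only if the $4\times4$ \emph{cover matrix} $C^S_{ij}=|\{\pi\in S:\pi(j)=i\}|$ is constant; this single algebraic condition immediately kills all but a few dozen candidate sets. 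For those survivors the Hessian is computed, and sets where it is indefinite are also eliminated, leaving a short explicit list handled by hand via one more continuity/interpolation lemma (which you also do not state but implicitly need: from permutons with sum above and below $|S|/24$ one must construct a non-uniform permuton with sum exactly $|S|/24$). Without the cover-matrix reduction your enumeration is not demonstrably finite in any practical sense, and without the interpolation lemma finding a permuton hitting $|S|/24$ exactly is an unnecessary extra difficulty.
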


Theorem~\ref{thm:main} is implied
by Theorems~\ref{thm:set8a}, \ref{thm:set8b}, \ref{thm:set8c}, \ref{thm:set12} and~\ref{thm:examples} 
which are stated and proved later in the paper (note that the third and fourth sets are rotationally symmetric).
Some of the arguments are supported by supplementary data,
which are presented in Appendices 1--5.
The Appendices are available as the ancillary file on arXiv,
which can be downloaded at {\tt https://arxiv.org/src/\\1909.11027/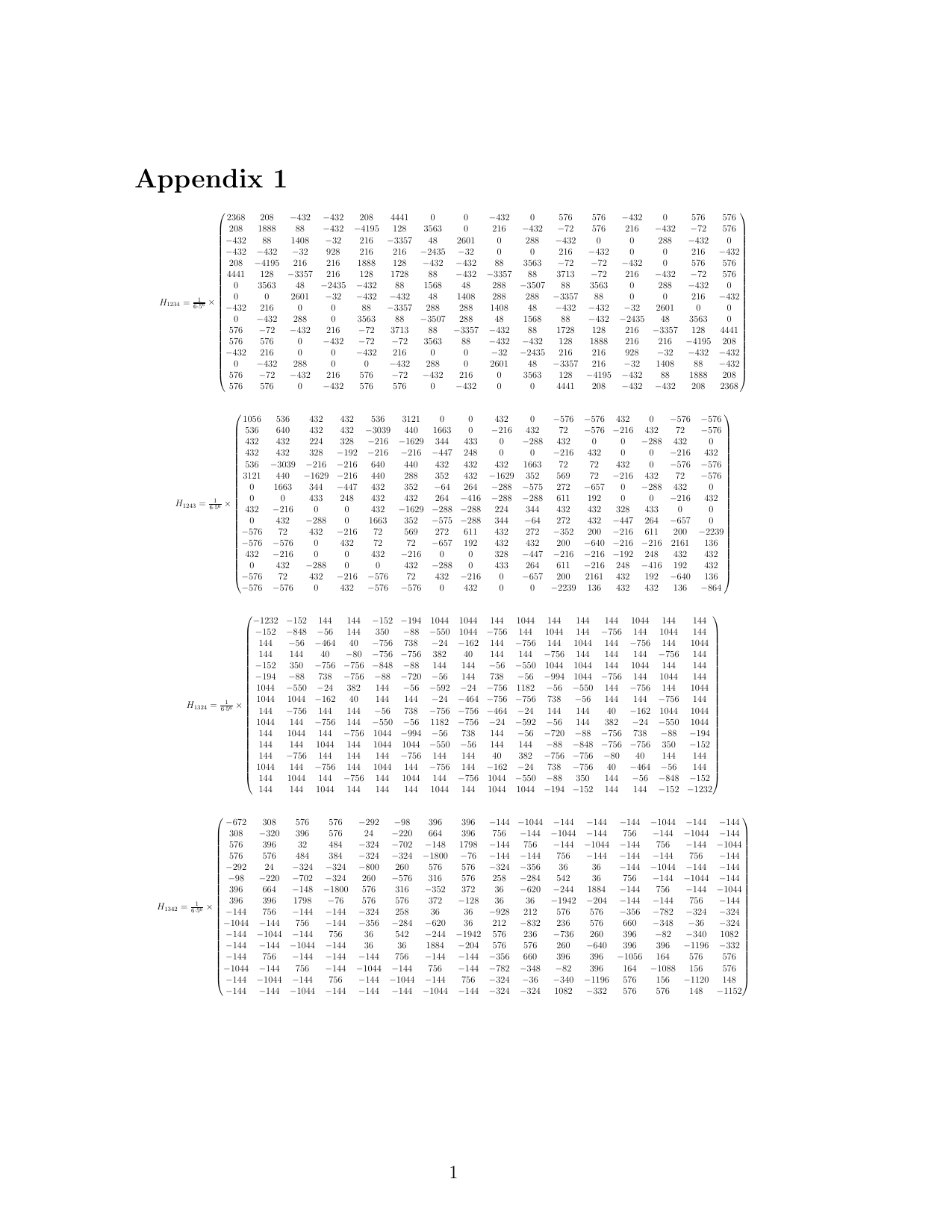}.

In relation to applications in statistics,
we remark that Even-Zohar and Leng~\cite{EveL19} designed nearly linear algorithms
for computing the sum of the pattern densities in an input permutation
for six out of the ten $\Sigma$-forcing sets listed in Theorem~\ref{thm:main}.

\section{Notation}
\label{sec:notation}

In this section, we fix the notation used throughout the paper.
The set of all $k$-permutations is denoted by $S_k$, and
$\cA$ denotes the set of all formal (finite) linear combinations of permutations with real coefficients.
If $\pi$ is a $k$-permutation, we often write $\pi(1)\pi(2)\ldots\pi(k)$ to represent the permutation $\pi$;
for example, $32145$ is a particular $5$-permutation.
Two permutations $\pi$ and $\sigma$ of the same order, say $k$, are \emph{symmetric}
if the permutation matrix of $\pi$ can be obtained from the permutation matrix of $\sigma$
by a sequence of reflections and rotations when viewed as $k\times k$ tables,
i.e., if either
\begin{multicols}{2}
\begin{itemize}
\item $\pi(i)=\sigma(i)$, or
\item $\pi(i)=\sigma(k+1-i)$, or
\item $\pi(i)=k+1-\sigma(i)$, or
\item $\pi(i)=k+1-\sigma(k+1-i)$, or
\item $\pi(i)=\sigma^{-1}(i)$, or
\item $\pi(i)=\sigma^{-1}(k+1-i)$, or
\item $\pi(i)=k+1-\sigma^{-1}(i)$, or
\item $\pi(i)=k+1-\sigma^{-1}(k+1-i)$ 
\end{itemize}
\end{multicols}
\noindent holds for all $i\in [k]$.
For example, exactly the following seven permutations are symmetric to $12534$ in addition to the permutation $12534$ itself:
$12453$, $23145$, $31245$, $35421$, $43521$, $54132$ and $54213$.

If $\tau$ is a permutation, then a \emph{$\tau$-rooted permutation}
is a permutation with $\lvert\tau\rvert$ distinguished elements such that 
the subpermutation induced by these elements is $\tau$;
the distinguished elements are referred to as \emph{roots}.
The two particular choices of a root permutation that we work with most often are $\tau_1=12$ and $\tau_2=21$.
When presenting rooted permutations, the roots will be underlined.
For example, 
$\underline{12}3$, $1\underline{23}$ and $\underline{1}2\underline{3}$ are distinct $\tau_1$-rooted permutations.
Finally,
the set $\cA^\tau$ is the set of all formal (finite) linear combinations of $\tau$-rooted permutations with real coefficients.

A \emph{permuton} is a Borel probability measure $\mu$ on $[0,1]^2$ that has uniform marginals,
i.e., $\mu\left([x,x']\times [0,1]\right)=x'-x$ for every $0\le x<x'\le 1$ and
$\mu\left([0,1]\times [y,y']\right)=y'-y$ for every $0\le y<y'\le 1$.
In other contexts, permutons are known as doubly stochastic measures or two-dimensional copulas. 
Given a permuton $\mu$, a \emph{$\mu$-random permutation of order $k$} is obtained in the way that we now describe.
We first sample $k$ points $(x_1,y_1),\ldots,(x_k,y_k)$ in $[0,1]^2$ according to the probability measure $\mu$.
Note that the probability that an $x$- or $y$-coordinate is shared by multiple points is zero
because $\mu$ has uniform marginals.
By renaming the points, we can assume that $x_1<\cdots<x_k$.
The $\mu$-random permutation $\pi\in S_k$ is then the unique permutation such that $\pi(i)<\pi(j)$ if and only if $y_i<y_j$ for every $i,j\in [k]$.
We define the \emph{pattern density} of $\pi\in S_k$ in the permuton $\mu$ to be the probability that
a $\mu$-random permutation of order $k$ is $\pi$.
A sequence $(\Pi_i)_{i\in\NN}$ of permutations is \emph{convergent}
if $\lvert\Pi_i\rvert$ grows to infinity and
the limit
\[\lim_{i\to\infty}d(\pi,\Pi_i)\]
exists for every permutation $\pi$.
It can be shown~\cite{HopKMRS13,HopKMS11a,KraP13} that
if $(\Pi_i)_{i\in\NN}$ is a convergent sequence of permutations, then there exists a unique permuton $\mu$ such that
\[\lim_{i\to\infty}d(\pi,\Pi_i)=d(\pi,\mu)\]
for every permutation $\pi$;
the permuton $\mu$ is called the \emph{limit} of the sequence $(\Pi_i)_{i\in\NN}$.
In the other direction, if $\mu$ is a permuton,
then, with probability one, a sequence of $\mu$-random permutations with increasing orders converges and
its limit is $\mu$.
We note that a sequence $(\Pi_i)_{i\in\NN}$ of permutations is quasirandom if and only if
its limit is the uniform measure on $[0,1]^2$.

Recall that the \emph{support} of a Borel measure $\mu$, denoted $\supp\mu$, is the set of all points $x$ such that
every open neighborhood of $x$ has positive measure under $\mu$. 
Fix a permutation $\tau\in S_\ell$.
A \emph{$\tau$-rooted permuton} is an $(\ell+1)$-tuple $\mu^\tau=(\mu,(x_1,y_1),\ldots,(x_\ell,y_\ell))$ such that
\begin{itemize}
\item $\mu$ is a permuton,
\item $(x_1,y_1),\ldots,(x_\ell,y_\ell)\in\supp\mu$, $x_1<\cdots<x_\ell$, and
\item $\tau(i)<\tau(j)$ if and only if $y_i<y_j$ for all $i,j\in [\ell]$.
\end{itemize}
The points $(x_1,y_1),\ldots,(x_\ell,y_\ell)$ are referred to as \emph{roots}.
If $\mu^\tau$ is a $\tau$-rooted permuton, then a \emph{$\mu^\tau$-random permutation of order $k\ge\ell$}
is a $\tau$-rooted permutation obtained by sampling $k-\ell$ points in $[0,1]^2$ according to the measure $\mu$,
forming a permutation of order $k$ using the $\ell$ roots and the $k-\ell$ sampled points, and
distinguishing the $\ell$ points corresponding to the roots of $\mu^\tau$ to be the roots of the permutation.
If $\pi^\tau$ is a $\tau$-rooted permutation, we write $d(\pi^\tau,\mu^\tau)$ for the probability that
a $\mu^\tau$-random permutation of order $\lvert\pi^\tau\rvert$ is $\pi^\tau$.

Fix a permuton $\mu$ for the rest of this section.
We define a mapping $h_{\mu}:\cA\to\RR$
by setting $h_{\mu}(\pi)$ to be $d(\pi,\mu)$ for every permutation $\pi$ and extending linearly.
Clearly, $h_{\mu}$ is a homomorphism from $\cA$ to $\RR$ that respects addition and multiplication by a real number.
One of the results of Razborov~\cite{Raz07} can be cast in our setting as follows:
it is possible to define a multiplication on the elements of $\cA$ in a way that
$h_{\mu}$ respects the multiplication, i.e., $h_{\mu}(A\times B)=h_{\mu}(A)h_{\mu}(B)$ for all $A,B\in\cA$.
We write $A\ge\alpha$ for an element $A\in\cA$ and a real $\alpha\in\RR$
if $h_{\mu}(A)\ge\alpha$ for every permuton $\mu$.
Analogously to the unrooted case,
for a $\tau$-rooted permuton $\mu^\tau$, we can define a homomorphism $h_{\mu^\tau}:\cA^\tau\to\RR$.

Next, for a permutation $\tau$ with $d(\tau,\mu)>0$,
we wish to define a probability distribution on $\tau$-rooted permutons arising from $\mu$.
Formally, we define $\mu^\tau$ to be a $\tau$-rooted permuton obtained from $\mu$
by choosing $\lvert\tau\rvert$ points randomly according to the probability measure $\mu$ to be the roots (and
sorting them according to their first coordinates)
conditioned on the event that the chosen roots yield the permutation $\tau$,
i.e., $\mu^\tau$ is a random $\tau$-rooted permuton
where the randomness comes from the choice of $\lvert\tau\rvert$ roots.
The probability distribution on $\tau$-rooted permutons
in turn defines a probability distribution on homomorphisms from $\cA^\tau$ to $\RR$, and
we will write $h_{\mu}^{\tau}$ for a random homomorphism from $\cA^\tau$ to $\RR$ sampled according to this distribution.
It can be shown~\cite{Raz07} that
there exists a well-defined linear map $\unlab{\cdot}{\tau}$ from $\cA^\tau$ to $\cA$ such that
\[h_{\mu}\left(\unlab{A}{\tau}\right)=d(\tau,\mu)\cdot\EE h_{\mu}^\tau(A)\]
for every $A\in\cA^\tau$;
if $d(\tau,\mu)=0$,
then $h_{\mu}\left(\unlab{A}{\tau}\right)=0$ and
the above equality holds with the right hand side considered to be zero (the expected
value is not well-defined in this case).
In particular, if $M$ is a $k\times k$ positive semidefinite matrix,
then the following holds for every vector $w\in\left(\cA^\tau\right)^k$:
\[h_{\mu}\left(\unlab{w^T Mw}{\tau}\right) \ge 0.\]

\section{$\Sigma$-forcing sets}
\label{sec:flags}

In this section, we prove that the sets listed in Theorem~\ref{thm:main} are $\Sigma$-forcing.
The proof is based on flag algebra calculations, which we present further in the section.
In addition, we will need the following lemma.

\begin{lemma}
\label{lm:supp}
Let $\mu$ be a permuton.
If it holds that
\[\mu\left(\left[\min\{x_1,x_2\},\max\{x_1,x_2\}\right]\times\left[\min\{y_1,y_2\},\max\{y_1,y_2\}\right]\right)=\lvert x_2-x_1\rvert\cdot\lvert y_2-y_1\rvert\]
for all points $(x_1,y_1),(x_2,y_2)\in\supp\mu$,
then $\mu$ is the uniform measure.
\end{lemma}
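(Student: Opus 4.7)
The plan is to show first that $\supp\mu=[0,1]^2$, after which the hypothesis applied to pairs of points in every axis-aligned rectangle directly gives $\mu([a,b]\times[c,d])=(b-a)(d-c)$, and hence $\mu$ is the uniform measure.

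To establish $\supp\mu=[0,1]^2$, I argue by contradiction: suppose some $(a,b)\in(0,1)^2$ lies outside $\supp\mu$. Then a small open box around $(a,b)$ inside $(0,1)^2$ is $\mu$-null, and enlarging it one side at a time yields a maximal open rectangle $R=(p,q)\times(r,s)\subseteq(0,1)^2$ with $\mu(R)=0$. The main case is when $R$ is strictly interior to $(0,1)^2$. Maximality then forces each of the four strips $(p-\epsilon,p)\times(r,s)$, $(q,q+\epsilon)\times(r,s)$, $(p,q)\times(r-\epsilon,r)$, $(p,q)\times(s,s+\epsilon)$ to have positive $\mu$-mass for every $\epsilon>0$; selecting support points in each strip and extracting convergent subsequences, using that $\supp\mu$ is closed, produces support points $P_L=(p,y_L)$, $P_R=(q,y_R)$, $P_B=(x_B,r)$, $P_T=(x_T,s)$ on the four sides of $\overline R$. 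The boundary $\partial R$ is $\mu$-null by the uniform marginals, so $\mu(\overline R)=0$; applying the hypothesis to the pairs $(P_L,P_R)$ and $(P_B,P_T)$ forces $y_L=y_R=:y^*$ and $x_B=x_T=:x^*$, and applying it to each of the four mixed pairs $(P_L,P_T),(P_L,P_B),(P_R,P_T),(P_R,P_B)$ yields the four alternative conditions $x^*=p$ or $y^*=s$, $x^*=p$ or $y^*=r$, $x^*=q$ or $y^*=s$, $x^*=q$ or $y^*=r$. Combining the first and third forces $y^*=s$, while combining the second and fourth forces $y^*=r$, a contradiction since $r<s$.

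The boundary cases, where $R$ touches $\partial[0,1]^2$, require an adaptation through a mass-balance argument. In the one-sided case, say $R=(0,q)\times(r,s)$ with $0<r<s<1$ and $0<q<1$, the cluster analysis still produces $(q,r),(q,s)\in\supp\mu$; for any support point $(x_0,y_0)$ with $x_0>q$, combining the hypothesis applied to the pairs $((q,r),(x_0,y_0))$ and $((q,s),(x_0,y_0))$ yields
\[
\mu([q,x_0]\times[r,s])=(x_0-q)(s-r).
\]
The $x$-projection of $\supp\mu\cap((q,1]\times[0,1])$ is dense in $[q,1]$ by the uniform $x$-marginal, so letting $x_0\to 1$ gives $\mu([q,1]\times[r,s])=(1-q)(s-r)$; but the $y$-marginal together with $\mu([0,q]\times[r,s])=0$ forces $\mu([q,1]\times[r,s])=s-r$, hence $q=0$, a contradiction. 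The two-sided corner case, such as $R=(0,q)\times(0,s)$, is the expected main obstacle, since the cluster analysis only forces the single corner $(q,s)$ into $\supp\mu$: I plan to handle it by combining the hypothesis at $(q,s)$ with support points chosen in each of the three non-$R$ quadrants to derive an analogous mass identity, and then comparing with the hypothesis applied to $(q,s)$ and a support point in the upper-right quadrant to reach a similar contradiction.
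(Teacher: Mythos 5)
Your strategy — take a maximal $\mu$-null open rectangle and split on how it meets $\partial[0,1]^2$ — is genuinely different from the paper's (which first shows $\partial[0,1]^2\subseteq\supp\mu$ by a case analysis on whether $(x,0),(x,1)$ lie in the support, and only then handles interior points). Your interior case is correct and, I think, cleaner than the paper's interior step: the four side-points collapse to $(p,y^*),(q,y^*),(x^*,r),(x^*,s)$ and the four "or"-conditions give an immediate contradiction. Your one-sided case is also correct: the two anchor points $(q,r),(q,s)\in\supp\mu$ let you evaluate $\mu([q,x_0]\times[r,s])$ for any support point $(x_0,y_0)$ with $x_0>q$ by splitting at $y_0$, and the mass-balance against the $y$-marginal forces $q=0$.

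The gap is the corner case, and it is a real one, not a routine adaptation. The one-sided argument works precisely because you have \emph{two} anchor points $(q,r)$ and $(q,s)$ sharing the $x$-coordinate $q$; the corner case gives you only the single anchor $(q,s)$, and one can check that $(q,0)\notin\supp\mu$ (indeed, the smallest $x$ with $(x,0)\in\supp\mu$ satisfies $x> q$), so the second anchor is genuinely unavailable. Tracing your sketched plan: using $(q,s)$ together with support points $(x_1,y_1)$ in the top-left quadrant and $(x_2,y_2)$ in the bottom-right quadrant, one gets
\[
\mu\bigl([q,x_2]\times[s,y_1]\bigr)=(x_2-q)(y_1-s)+(q-x_1)(s-y_2),
\]
whereas the hypothesis applied to $(q,s)$ and a support point $(x_3,y_3)$ in the top-right quadrant gives $\mu([q,x_3]\times[s,y_3])=(x_3-q)(y_3-s)$. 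To compare these you would need to choose $x_3=x_2$ and $y_3=y_1$, i.e.\ a support point in the top-right quadrant sitting directly above one in the bottom-right and directly to the right of one in the top-left. That is a lattice-closure property of $\supp\mu$ which you have not established and which is essentially as strong as the conclusion of the lemma itself. So the sketch as written does not close, and I do not see a short fix along these lines. This is exactly the difficulty the paper sidesteps by first proving the whole boundary of $[0,1]^2$ is in the support (noting that if a boundary point is missing then, the support being closed, a \emph{non-corner} boundary point is missing), after which any null rectangle is automatically interior.

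Two minor points: the construction of the maximal null rectangle ("enlarging one side at a time") should be spelled out (e.g.\ take the infimum/supremum of each side in turn, or invoke Zorn on chains of null rectangles), and one should note $\mu(\overline R)=\mu(R)$ because the boundary of $R$ is a union of line segments, hence $\mu$-null by the uniform marginals — you use this implicitly when you write $\mu(\overline R)=0$.
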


\begin{proof}
Our goal is to show that $\supp\mu=[0,1]^2$.
We start with showing that all points on the boundary of $[0,1]^2$ are contained in $\supp\mu$.
Suppose that $\supp\mu$ does not contain the whole boundary of $[0,1]^2$.
Since $\supp\mu$ is closed, it is enough to consider the points distinct from the four corners.
By symmetry, we need to consider the following two cases.
\begin{itemize}
\item{\bf There exists $x\in (0,1)$ such that $(x,0)\not\in\supp\mu$ but $(x,1)\in\supp\mu$.}
     By the definition of the support of a measure,
     there exists $\varepsilon\in(0,\min\{x,1-x\})$ such that
     \[\mu\left([x-\varepsilon,x+\varepsilon]\times[0,\varepsilon]\right)=0.\]
     Let $y'\in [0,1]$ be the infimum among all reals such that
     $(x',y')\in\supp\mu$ for some $x'\in (x-\varepsilon,x)$.
     If there was no such $y'$,
     then the measure of the rectangle $[x-\varepsilon,x]\times [0,1]$ would be zero,
     which is impossible because the measure $\mu$ has uniform marginals.
     Observe that $y'\in [\varepsilon,1]$.
     Since $\supp\mu$ is a closed set, there exists $x'\in [x-\varepsilon,x]$ such that $(x',y')\in\supp\mu$;
     if possible, choose $x'$ distinct from $x$.
     
     We first consider the case that $x'<x$.
     The assumption of the lemma implies that
     the measure of the rectangle $[x',x]\times [y',1]$ is $(x-x')(1-y')$.
     On the other hand, the choice of $y'$ implies that
     the measure of the rectangle $[x',x]\times [0,y']$ is zero.
     Consequently, the measure of the rectangle $[x',x]\times [0,1]$ is $(x-x')(1-y')<x-x'$,
     which is impossible.

     It remains to analyze the case that $x'=x$.
     The choice of $y'$ implies that there exist $y''\in (y',1]$ and $x''\in (x-\varepsilon,x)$
     such that $(x'',y'')\in\supp\mu$.
     Since the measure of the rectangle $[x'',x]\times [y'',1]$ is $(x-x'')(1-y'')$ and
     the measure of the rectangle $[x'',x]\times [y',y'']$ is $(x-x'')(y''-y')$,
     the measure of the rectangle $[x'',x]\times [y',1]$ is $(x-x'')(1-y')$.
     On the other hand, the choice of $y'$ implies that
     the measure of the rectangle $[x'',x]\times [0,y']$ is zero,
     which yields that the measure of the rectangle $[x'',x]\times [0,1]$ is less than $x-x''$,
     which is impossible.
\item{\bf There exists $x\in (0,1)$ such that $(x,0)\not\in \supp{\mu}$ and $(x,1)\not\in \supp\mu$.}
     By the definition of the support of a measure,
     there exists $\varepsilon\in(0,\min\{x,1-x\})$ such that
     \[\mu\left([x-\varepsilon,x+\varepsilon]\times[0,\varepsilon]\right)=0\mbox{ and }
       \mu\left([x-\varepsilon,x+\varepsilon]\times[1-\varepsilon,1]\right)=0.\]
     Let $y_1\in [0,1]$ be the infimum among all reals such that
     $(x_1,y_1)\in\supp\mu$ for some $x_1\in (x-\varepsilon,x+\varepsilon)$.
     If there was no such $y_1$,
     then the measure of the rectangle $[x-\varepsilon,x+\varepsilon]\times [0,1]$ would be zero,
     which is impossible because the measure $\mu$ has uniform marginals.
     Since $\supp\mu$ is a closed set, there exists $x_1\in [x-\varepsilon,x+\varepsilon]$ such that $(x_1,y_1)\in\supp\mu$.
     Note that $y_1\in [\varepsilon,1-\varepsilon]$.
     Similarly,
     let $y_2\in [0,1]$ be the supremum among all reals such that
     $(x_2,y_2)\in\supp\mu$ for some $x_2\in (x-\varepsilon,x+\varepsilon)$ (again note that $y_2\in [\varepsilon,1-\varepsilon]$) and
     we fix $x_2\in [x-\varepsilon,x+\varepsilon]$ such that $(x_2,y_2)\in\supp\mu$.
     If possible, we choose $x_1$ and $x_2$ above such that $x_1\ne x_2$.

     We first consider the case that $x_1\ne x_2$; by symmetry, we can assume that $x_1<x_2$.
     The assumption of the lemma implies that the measure of the rectangle $[x_1,x_2]\times [y_1,y_2]$ is $(x_2-x_1)(y_2-y_1)$, and
     the choices of $y_1$ and $y_2$ imply that
     the measure of each of the rectangles $[x_1,x_2]\times [0,y_1]$ and $[x_1,x_2]\times [y_2,1]$ is zero.
     It follows that the measure of the rectangle $[x_1,x_2]\times [0,1]$ is $(x_2-x_1)(y_2-y_1)<x_2-x_1$,
     which is impossible.

     It remains to consider the case that $x_1=x_2$.
     Since the measure of the rectangle $[x-\varepsilon,x+\varepsilon]\times [0,1]$ is not zero,
     there exists $x_3\in [x-\varepsilon,x+\varepsilon]$, $x_3\ne x_1$, and $y_3\in (y_1,y_2)$ such that $(x_3,y_3)\in\supp\mu$.
     By symmetry, we can assume that $x_1<x_3$.
     The measures of the rectangles $[x_1,x_3]\times [y_1,y_3]$ and $[x_1,x_3]\times [y_3,y_2]$
     are $(x_3-x_1)(y_3-y_1)$ and $(x_3-x_1)(y_2-y_3)$, respectively.
     Since the measure of each of the rectangles $[x_1,x_3]\times [0,y_1]$ and $[x_1,x_3]\times [y_2,1]$ is zero,
     we conclude that the measure of the rectangle $[x_1,x_3]\times [0,1]$ is $(x_3-x_1)(y_2-y_1)<x_3-x_1$,
     which is impossible.
\end{itemize}

We have shown that all points on the boundary of $[0,1]^2$ are contained in $\supp\mu$.
Suppose that there exists a point $(x,y)\in (0,1)^2$ that is not contained in $\supp\mu$, and
let $\varepsilon \in \left(0, \min\{x,y,1-x,1-y\}\right)$ be such that
the whole set $[x-\varepsilon,x+\varepsilon]\times [y-\varepsilon,y+\varepsilon]$ is not contained in $\supp\mu$.
Let $y_1$ be the supremum among all reals in $[0,y-\varepsilon]$ such that
$(x_1,y_1)\in\supp\mu$ for some $x_1\in (x-\varepsilon,x+\varepsilon)$, and
let $y_2$ be the infimum among all reals in $[y+\varepsilon,1]$ such that
$(x_2,y_2)\in\supp\mu$ for some $x_2\in (x-\varepsilon,x+\varepsilon)$.
Further, let $x_1,x_2\in [x-\varepsilon,x+\varepsilon]$ be such that
$(x_1,y_1)\in\supp\mu$ and $(x_2,y_2)\in\supp\mu$.
Note that $y_1$ can be $0$ and $y_2$ can be $1$, and $y_2-y_1\ge 2\varepsilon$.

We first consider the case that $x_1\ne x_2$. By symmetry, we can assume that $x_1<x_2$.
Since the boundary of the square $[0,1]^2$ is contained in $\supp\mu$,
the measures of the rectangles $[x_1,x_2]\times [0,y_1]$ and $[x_1,x_2]\times [y_2,1]$
are $(x_2-x_1)y_1$ and $(x_2-x_1)(1-y_2)$, respectively.
On the other hand, the choice of $y_1$ and $y_2$ implies that
the measure of the rectangle $[x_1,x_2]\times [y_1,y_2]$ is zero.
Consequently, the measure of the rectangle $[x_1,x_2]\times [0,1]$ is $(x_2-x_1)(1-y_2+y_1)<x_2-x_1$,
which is impossible.

To conclude the proof, we need to analyze the case $x_1=x_2$.
Let $x_3$ be any point in the interval $[x-\varepsilon,x+\varepsilon]$ distinct from $x_1=x_2$.
By symmetry, we can assume that $x_1<x_3$.
Again, since the boundary of the square $[0,1]^2$ is contained in $\supp\mu$,
it follows that 
the measures of the rectangles $[x_1,x_3]\times [0,y_1]$ and $[x_1,x_3]\times [y_2,1]$
are $(x_3-x_1)y_1$ and $(x_3-x_1)(1-y_2)$, respectively, and
the choice of $y_1$ and $y_2$ yields that
the measure of the rectangle $[x_1,x_3]\times [y_1,y_2]$ is zero.
We obtain that the measure of the rectangle $[x_1,x_3]\times [0,1]$ is $(x_3-x_1)(1-y_2+y_1)<x_3-x_1$,
which is impossible.
We can now conclude that the support of the measure $\mu$ is the whole square $[0,1]^2$.
Consequently the measure of each set $[x,x']\times [y,y']$ is equal to $(x'-x)(y'-y)$, 
which implies that the measure $\mu$ is the uniform measure on $[0,1]^2$.
This finishes the proof of the lemma.
\end{proof}

For the rest of the section,
we fix the following elements $A_1\in\cA^{\tau_1}$ and $A_2\in\cA^{\tau_2}$.
\begin{align*}
A_1 & = \left(\underline{1}2\underline{3}4 - \underline{1}4\underline{3}2\right) + \left(1\underline{2}3\underline{4} - 3\underline{2}1\underline{4}\right) + \left(\underline{2}3\underline{4}1 - \underline{2}1\underline{4}3\right) + \left(4\underline{1}2\underline{3} - 2\underline{1}4\underline{3}\right)\\
A_2 & = \left(\underline{3}2\underline{1}4 - \underline{3}4\underline{1}2\right) + \left(1\underline{4}3\underline{2} - 3\underline{4}1\underline{2}\right) + \left(\underline{4}3\underline{2}1 - \underline{4}1\underline{2}3\right) + \left(4\underline{3}2\underline{1} - 2\underline{3}4\underline{1}\right)
\end{align*}
We next show that if the value of $A_i$ is zero for almost all random $\tau_1$-rooted homomorphisms for both $i=1$ and $i=2$,
then the permuton satisfies the assumptions of Lemma~\ref{lm:supp}.
\begin{lemma}
\label{lm:A1}
Let $\mu$ be a permuton.
If $h_{\mu}^{\tau_1}(A_1)=0$ with probability one,
then
\[\mu\left(\left[x_1,x_2\right]\times\left[y_1,y_2\right]\right)=\lvert x_2-x_1\rvert\cdot\lvert y_2-y_1\rvert\]
for all points $(x_1,y_1),(x_2,y_2)\in\supp\mu$ such that $x_1\le x_2$ and $y_1\le y_2$.
\end{lemma}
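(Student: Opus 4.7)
The plan is to interpret $A_1$ geometrically, carry out the resulting algebra for $\mu\otimes\mu$-almost every choice of roots, and then extend the identity by continuity to the whole support. Fix roots $p_1=(x_1,y_1)$ and $p_2=(x_2,y_2)$ in $\supp\mu$ with $x_1<x_2$ and $y_1<y_2$. The lines $x=x_1,x_2$ and $y=y_1,y_2$ partition $(0,1)^2$ into nine open rectangles, which I label by column ($L$, $M$, $R$) and row ($B$, $C$, $T$); so $MC=(x_1,x_2)\times(y_1,y_2)$, and so on. For each of the eight rooted $4$-permutations appearing in $A_1$, I read off which two regions contain its two non-root points; for instance, $\underline{1}2\underline{3}4$ contributes the pair $\{MC, RT\}$. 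In every one of the eight patterns the two non-roots lie in distinct columns, so their $x$-order is forced, and the density of the pattern in $\mu^{\tau_1}$ equals $2\,m(R_1)\,m(R_2)$, where $m$ denotes $\mu$-measure and $R_1,R_2$ are the two regions containing the non-roots. A routine tabulation of the eight patterns and regrouping collapses $h_{\mu}^{\tau_1}(A_1)$ to
\[
2\bigl(m(MC)\cdot a - b\cdot c\bigr),
\]
where $a=m(LB)+m(LT)+m(RB)+m(RT)$, $b=m(MB)+m(MT)$, and $c=m(LC)+m(RC)$.

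The uniform-marginal property now forces $b+m(MC)=x_2-x_1$ (middle column), $c+m(MC)=y_2-y_1$ (middle row), and $a+b+c+m(MC)=1$. Substituting these into the assumed equation $m(MC)\cdot a = b\cdot c$ and simplifying yields $m(MC)=(x_2-x_1)(y_2-y_1)$. Since every horizontal or vertical line is $\mu$-null by uniform marginals, the boundary of $[x_1,x_2]\times[y_1,y_2]$ is $\mu$-null, so the measure of the closed rectangle agrees with $m(MC)$. This proves the identity of the lemma for $\mu\otimes\mu$-almost every pair $(p_1,p_2)$ with $x_1<x_2$ and $y_1<y_2$.

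To promote the identity to every such pair in $\supp\mu\times\supp\mu$, I argue by continuity. The function $F(x_1,y_1,x_2,y_2)=\mu([x_1,x_2]\times[y_1,y_2])-(x_2-x_1)(y_2-y_1)$ is continuous on $\{x_1\le x_2,\,y_1\le y_2\}$: squeezing the varying rectangle between slightly enlarged and slightly shrunk fixed rectangles and using that the limit rectangle's boundary is $\mu$-null gives continuity of the measure term. The zero set of $F$ is therefore closed and has full $\mu\otimes\mu$-measure on the open set $\{x_1<x_2,\,y_1<y_2\}$; hence it contains $\supp{\mu\otimes\mu}=\supp\mu\times\supp\mu$ restricted to that set. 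The degenerate cases $x_1=x_2$ or $y_1=y_2$ are trivial because both sides of the claimed identity vanish. I expect the main obstacle to be the bookkeeping in the tabulation step: correctly reading off the region assignments for all eight terms of $A_1$ and verifying the algebraic cancellation that produces the clean product form above; the continuity extension, while it must be stated carefully, follows standard measure-theoretic arguments once uniform marginals are invoked.
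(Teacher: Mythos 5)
Your proof is correct and follows essentially the same route as the paper's: you use the same partition of $[0,1]^2$ into nine rectangles by the two roots, obtain the identical bilinear identity $m(MC)\cdot a = b\cdot c$ (which is exactly the paper's $a_{22}(a_{11}+a_{13}+a_{31}+a_{33})=(a_{21}+a_{23})(a_{12}+a_{32})$), and simplify via the uniform-marginal constraints on the middle row and column to reach $m(MC)=(x_2-x_1)(y_2-y_1)$. Your continuity/support extension is phrased more abstractly (closed zero set of a continuous function containing $\supp(\mu\otimes\mu)$) than the paper's explicit $\varepsilon$-neighborhood estimate, but the two arguments are equivalent in substance.
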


\begin{figure}
\begin{center}
\epsfbox{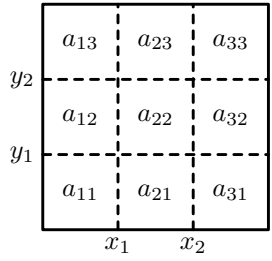}
\end{center}
\caption{Notation used in the proof of Lemma~\ref{lm:A1}.}
\label{fig:A1}
\end{figure}

\begin{proof}
Fix $(x_1,y_1),(x_2,y_2)\in\supp\mu$ such that $x_1\le x_2$ and $y_1\le y_2$ and
such that $h(A_1)=0$ for the homomorphism $h:\cA^{\tau_1}\to\RR$
associated with the $\tau_1$-rooted permuton $(\mu,(x_1,y_1),(x_2,y_2))$.
Further let $(x_0,y_0)=(0,0)$ and $(x_3,y_3)=(1,1)$, and
let
\[a_{ij}=\mu\left([x_{i-1},x_i]\times [y_{j-1},y_j]\right)\]
for $i,j\in [3]$.
See Figure~\ref{fig:A1} for illustration of the just introduced notation.

Since $h(A_1)=0$, the following holds:
\[a_{22}a_{33}-a_{23}a_{32}+a_{22}a_{11}-a_{12}a_{21}+a_{22}a_{31}-a_{21}a_{32}+a_{22}a_{13}-a_{12}a_{23}=0\;.\]
We rewrite this expression using the property that $\mu$ has uniform marginals as follows:
\begin{align*}
0 & = a_{22}a_{33}-a_{23}a_{32}+a_{22}a_{11}-a_{12}a_{21}+a_{22}a_{31}-a_{21}a_{32}+a_{22}a_{13}-a_{12}a_{23} \\
  & = a_{22}(a_{11}+a_{13}+a_{31}+a_{33})-(a_{21}+a_{23})(a_{12}+a_{32}) \\
  & = a_{22}(1-(x_2-x_1)-(y_2-y_1)+a_{22})-(x_2-x_1-a_{22})(y_2-y_1-a_{22}) \\
  & = a_{22}-(x_2-x_1)(y_2-y_1)
\end{align*}
We conclude that the equality from the statement of the lemma holds
for almost all points $(x_1,y_1),(x_2,y_2)\in\supp\mu$ such that $x_1\le x_2$ and $y_1\le y_2$.

We next show that the equality in the statement of the lemma holds
for all $(x_1,y_1),(x_2,y_2)\in\supp\mu$ such that $x_1\le x_2$ and $y_1\le y_2$.
Fix $(x_1,y_1),(x_2,y_2)\in\supp\mu$ such that $x_1\le x_2$ and $y_1\le y_2$.
If $x_1=x_2$ or $y_1=y_2$, then the equality holds since the measure $\mu$ has uniform marginals.
Let $\varepsilon_0=\min\{x_2-x_1,y_2-y_1\}$ and consider $\varepsilon\in (0,\varepsilon_0/2)$.
Since all points in the $\varepsilon$-neighborhood of $(x_1,y_1)$
have both their coordinates smaller than all points in the $\varepsilon$-neighborhood of $(x_2,y_2)$,
almost every point $(x'_1,y'_1)$ in the intersection of $\supp\mu$ and the $\varepsilon$-neighborhood of $(x_1,y_1)$ and
almost every point $(x'_2,y'_2)$ in the intersection of $\supp\mu$ and the $\varepsilon$-neighborhood of $(x_2,y_2)$
satisfy the equality from the statement of the lemma, and
it also holds that
\[\left|\mu\left(\left[x_1,x_2\right]\times\left[y_1,y_2\right]\right)-
        \mu\left(\left[x'_1,x'_2\right]\times\left[y'_1,y'_2\right]\right)\right|\le 4\varepsilon\]
because the measure $\mu$ has uniform marginals.	
Since both $(x_1,y_1)$ and $(x_2,y_2)$ are contained in $\supp\mu$,
the $\varepsilon$-neighborhood of $(x_1,y_1)$ has positive measure and
the $\varepsilon$-neighborhood of $(x_2,y_2)$ also has positive measure,
we conclude that
\[\big\lvert\mu\left(\left[x_1,x_2\right]\times\left[y_1,y_2\right]\right)-\lvert x_2-x_1\rvert\cdot\lvert y_2-y_1\rvert\big\rvert\le 8\varepsilon\]
for every $\varepsilon\in (0,\varepsilon_0/2)$.
It follows that the equality from the statement of the lemma holds
for all points $(x_1,y_1),(x_2,y_2)\in\supp\mu$ such that $x_1\le x_2$ and $y_1\le y_2$.
\end{proof}

A symmetric argument yields the following lemma.

\begin{lemma}
\label{lm:A2}
Let $\mu$ be a permuton.
If $h_{\mu}^{\tau_2}(A_2)=0$ with probability one,
then
\[\mu\left(\left[x_1,x_2\right]\times\left[y_1,y_2\right]\right)=\lvert x_2-x_1\rvert\cdot\lvert y_2-y_1\rvert\]
for all points $(x_1,y_2),(x_2,y_1)\in\supp\mu$ such that $x_1\le x_2$ and $y_1\le y_2$.
\end{lemma}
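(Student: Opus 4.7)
The plan is to reduce Lemma~\ref{lm:A2} to Lemma~\ref{lm:A1} by exploiting the vertical-reflection symmetry of the permuton model. For a $k$-permutation $\pi$, let $\phi(\pi)$ be the permutation defined by $\phi(\pi)(i) = k+1-\pi(i)$, and for a permuton $\mu$ let $\mu^\phi$ be the permuton obtained by reflecting $\mu$ across the line $y=1/2$, so that $(x,y)\in\supp\mu$ if and only if $(x,1-y)\in\supp\mu^\phi$. The involution $\phi$ swaps $\tau_1=12$ and $\tau_2=21$, and it extends to a density-preserving bijection between $\tau_1$-rooted and $\tau_2$-rooted permutations: if $\mu^{\tau_2}=(\mu,(x_1,y_2),(x_2,y_1))$ and $\nu^{\tau_1}=(\mu^\phi,(x_1,1-y_2),(x_2,1-y_1))$, then $d(\phi(\sigma),\mu^{\tau_2})=d(\sigma,\nu^{\tau_1})$ for every $\tau_1$-rooted permutation $\sigma$.

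The first step is to check that $\phi(A_1)=A_2$ as elements of the respective algebras (applying $\phi$ coefficientwise). For instance, $\phi(\underline{1}2\underline{3}4-\underline{1}4\underline{3}2)=\underline{4}3\underline{2}1-\underline{4}1\underline{2}3$, which is the third summand of $A_2$; the other three summands of $A_1$ map in the same way to the first, second, and fourth summands of $A_2$. Combined with the density-preservation above, this shows that the hypothesis $h_\mu^{\tau_2}(A_2)=0$ holding with probability one is equivalent to $h_{\mu^\phi}^{\tau_1}(A_1)=0$ holding with probability one for $\mu^\phi$.

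The second step is to apply Lemma~\ref{lm:A1} to the permuton $\mu^\phi$. This yields $\mu^\phi([x_1,x_2]\times[y'_1,y'_2])=(x_2-x_1)(y'_2-y'_1)$ for every $(x_1,y'_1),(x_2,y'_2)\in\supp\mu^\phi$ with $x_1\le x_2$ and $y'_1\le y'_2$. Setting $y'_1=1-y_2$ and $y'_2=1-y_1$ and unflipping (i.e.\ using $\mu^\phi([x_1,x_2]\times[1-y_2,1-y_1])=\mu([x_1,x_2]\times[y_1,y_2])$ together with the correspondence of supports) delivers precisely the conclusion of Lemma~\ref{lm:A2}.

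The only possible obstacle is the pattern bookkeeping required to confirm $\phi(A_1)=A_2$, which is purely mechanical (eight pairs to verify). Anyone preferring a self-contained argument can instead mimic the proof of Lemma~\ref{lm:A1} directly: place the roots $(x_1,y_2)$ and $(x_2,y_1)$ at the top-left and bottom-right corners of the central cell of a $3\times 3$ subdivision, compute each term of $h_\mu^{\tau_2}(A_2)$ as a product of two cell-measures, and observe that the eight contributions again collapse to the identity $a_{22}(a_{11}+a_{13}+a_{31}+a_{33})=(a_{12}+a_{32})(a_{21}+a_{23})$, which simplifies via the uniform-marginal relations to $a_{22}=(x_2-x_1)(y_2-y_1)$; the extension from almost-all pairs to all pairs then follows by the same continuity argument used at the end of the proof of Lemma~\ref{lm:A1}.
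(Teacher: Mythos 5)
Your proposal is correct and matches the paper's approach: the paper proves Lemma~\ref{lm:A2} by invoking the symmetry with Lemma~\ref{lm:A1} (``a symmetric argument''), and your reduction via the vertical reflection $\phi$ (checking $\phi(A_1)=A_2$ and transferring the hypothesis and conclusion to $\mu^\phi$) is precisely the argument that symmetry statement gestures at. Your alternative direct computation with the $3\times 3$ cell measures is also correct and is the other standard way to read ``a symmetric argument.''
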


We are now ready to prove the first of the four main results of this section.
The proofs of Theorems~\ref{thm:set8a}, \ref{thm:set8b}, \ref{thm:set8c} and \ref{thm:set12} are based on the flag algebra method.
We follow the standard path of applying the method by setting up appropriate SDP programs;
solving these programs yields the positive semidefinite matrices $M$ and vectors $A_1,\ldots,O_1$ and $A_2,\ldots,O_2$
used in the proofs of the four theorems.

\begin{theorem}
\label{thm:set8a}
Let $S=\{1234,1243,2134,2143,3412,3421,4312,4321\}$.
It holds that
\[\sum_{\pi\in S}d(\pi,\mu)\ge\frac{1}{3}\]
for every permuton $\mu$, and equality holds if and only if $\mu$ is uniform.
\end{theorem}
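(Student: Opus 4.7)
The easy direction is immediate: when $\mu$ is the uniform measure, every $4$-permutation has density $1/24$, and summing over the eight elements of $S$ gives exactly $8/24 = 1/3$. The substance of the theorem is therefore the lower bound together with the characterization of equality.

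For the lower bound, I would follow the standard flag-algebra sum-of-squares route. The plan is to produce two positive semidefinite matrices $M_1, M_2$ together with vectors of flags $w_1 = (A_1, B_1, \ldots, O_1)$ in $\cA^{\tau_1}$ and $w_2 = (A_2, B_2, \ldots, O_2)$ in $\cA^{\tau_2}$, where $A_1, A_2$ are exactly the specific elements defined just before the theorem, such that the identity
\[\sum_{\pi \in S} \pi \;-\; \frac{1}{3} \;=\; \unlab{w_1^T M_1 w_1}{\tau_1} \;+\; \unlab{w_2^T M_2 w_2}{\tau_2}\]
holds in $\cA$ after both sides are expanded as linear combinations of $4$-permutations via the standard flag-algebra product and the unlabeling operators. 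Applying $h_\mu$ and using the fact that $h_\mu\bigl(\unlab{w^T M w}{\tau}\bigr) \ge 0$ for every PSD $M$ then yields $\sum_{\pi\in S} d(\pi, \mu) \ge 1/3$.

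For the equality case, suppose $\sum_{\pi \in S} d(\pi, \mu) = 1/3$. Both summands on the right-hand side of the identity must then vanish under $h_\mu$, which by the definition of $\unlab{\cdot}{\tau}$ means that for each $i \in \{1, 2\}$
\[d(\tau_i, \mu) \cdot \EE\!\left[h_\mu^{\tau_i}(w_i)^T M_i\, h_\mu^{\tau_i}(w_i)\right] = 0.\]
The degenerate case $d(\tau_i, \mu) = 0$ would force $\mu$ to be supported on a monotone curve, which is easily checked not to achieve the value $1/3$ by a direct density computation. Otherwise, the PSD quadratic form vanishes almost surely, and provided the SDP has been set up so that the $A_i$-coordinate direction is not in the kernel of $M_i$ (a property to be verified on the explicit SDP output), one concludes that $h_\mu^{\tau_1}(A_1) = 0$ and $h_\mu^{\tau_2}(A_2) = 0$ with probability one. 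Lemmas~\ref{lm:A1} and~\ref{lm:A2} then give the product rectangle identity for, respectively, concordant and discordant pairs of points in $\supp\mu$; together these cover every pair of points in the support, so Lemma~\ref{lm:supp} applies and forces $\mu$ to be uniform.

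The main obstacle is the flag-algebra SDP step: both solving the semidefinite program and converting the numerical solution into an exact rational certificate that realises the above identity. A secondary but essential point is that the SDP must be arranged so that the specific elements $A_1$ and $A_2$ (which encode the rectangle identity needed in Lemmas~\ref{lm:A1} and~\ref{lm:A2}) appear non-trivially in the sum-of-squares representation; without this, the equality characterisation would not follow from the SDP bound.
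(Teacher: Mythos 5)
Your proposal takes essentially the same route as the paper: set up a flag-algebra sum-of-squares certificate over the two root types $\tau_1$ and $\tau_2$, deduce the lower bound from positive semidefiniteness, and in the equality case extract that $h_\mu^{\tau_1}(A_1)=0$ and $h_\mu^{\tau_2}(A_2)=0$ with probability one so that Lemmas~\ref{lm:A1}, \ref{lm:A2} and~\ref{lm:supp} apply. The overall structure and the role of $A_1,A_2$ match the paper's proof.

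One detail in your equality analysis is slightly off and worth correcting. You ask that the ``$A_i$-coordinate direction is not in the kernel of $M_i$,'' but this is not the right condition. If $w^T M w = 0$ for a PSD matrix $M$, then $w\in\ker M$, so to conclude that the $A_i$-coordinate of $w$ vanishes you need $\ker M$ to lie inside the coordinate hyperplane, i.e.\ the $A_i$-direction must lie in $(\ker M)^{\perp} = \im(M)$, not merely outside $\ker M$. The paper sidesteps this subtlety altogether by choosing the certificate matrix $M$ to be positive \emph{definite} (its kernel is trivial), and moreover it does not place $A_1$ as a coordinate of $w_1$ at all: $w_1 = (B_1,C_1,D_1,E_1)$ for explicit flags $B_1,\dots,E_1$, with $M = \operatorname{diag}(1,1,2,2)$, and the key observation is that $A_1 = B_1 + C_1 - D_1 - E_1$ is a linear combination of these coordinates, so once all four coordinates of $w_1$ vanish under $h_\mu^{\tau_1}$, so does $A_1$. (The degenerate case $d(\tau_i,\mu)=0$ you raise is handled implicitly: it forces $\mu$ to be the monotone permuton, for which the pattern sum is $1\ne\tfrac13$, so it never arises under the equality hypothesis, and in any event Lemmas~\ref{lm:A1}--\ref{lm:A2} hold vacuously there.) With these adjustments your argument coincides with the paper's.
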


\begin{proof}
Let $B_1$, $C_1$, $D_1$ and $E_1$ be the following four elements of $\cA^{\tau_1}$.
\begin{align*}
B_1 &= \left(1\underline{2}3\underline{4} - 3\underline{2}1\underline{4}\right) + \left(1\underline{23}4 - 4\underline{23}1\right)
     + \left(1\underline{24}3 - 3\underline{24}1\right) + \left(1\underline{2}4\underline{3} - 4\underline{2}1\underline{3}\right)\\
C_1 &= \left(\underline{1}2\underline{3}4 - \underline{1}4\underline{3}2\right) + \left(1\underline{23}4 - 4\underline{23}1\right)
     + \left(\underline{2}1\underline{3}4 - \underline{2}4\underline{3}1\right) + \left(2\underline{13}4 - 4\underline{13}2\right)\\
D_1 &= \left(2\underline{1}4\underline{3} - 4\underline{1}2\underline{3}\right) + \left(1\underline{23}4 - 4\underline{23}1\right)
     + \left(2\underline{13}4 - 4\underline{13}2\right) + \left(1\underline{2}4\underline{3} - 4\underline{2}1\underline{3}\right)\\
E_1 &= \left(\underline{2}1\underline{4}3 - \underline{2}3\underline{4}1\right) + \left(1\underline{23}4 - 4\underline{23}1\right)
     + \left(\underline{2}1\underline{3}4 - \underline{2}4\underline{3}1\right) + \left(1\underline{24}3 - 3\underline{24}1\right)
\end{align*}
Further, let $B_2$, $C_2$, $D_2$ and $E_2$ be the corresponding four elements of $\cA^{\tau_2}$,
e.g., $B_2$ is the following element:
\[ B_2 = \left(1\underline{4}3\underline{2} - 3\underline{4}1\underline{2}\right) + \left(1\underline{32}4 - 4\underline{32}1\right)
       + \left(1\underline{42}3 - 3\underline{42}1\right) + \left(1\underline{3}4\underline{2} - 4\underline{3}1\underline{2}\right).\]
Finally, let $M$ be the following (positive definite) matrix.
\[M=\begin{bmatrix}
1&0&0&0\\
0&1&0&0\\
0&0&2&0\\
0&0&0&2
\end{bmatrix}\]
A direct computation yields that
\begin{align*}
h_{\mu}\left(\unlab{w_1Mw_1^T}{\tau_1}+\unlab{w_2Mw_2^T}{\tau_2}\right)
 & = h_{\mu}\left(\frac{8}{9}\sum_{\pi\in S}\pi-\frac{2}{9}\sum_{\pi\in S_4\setminus S}\pi\right)\\
 & = \frac{2}{3}\left(\sum_{\pi\in S}d(\pi,\mu)-\frac{1}{3}\right)
\end{align*}
where $w_1=(B_1,C_1,D_1,E_1)$ and $w_2=(B_2,C_2,D_2,E_2)$.
Since the matrix $M$ is positive semidefinite,
it holds that $h_{\mu}\left(\unlab{w_1Mw_1^T}{\tau_1}\right)\ge 0$ and
$h_{\mu}\left(\unlab{w_2Mw_2^T}{\tau_2}\right)\ge 0$,
which implies that
\[0\le \sum_{\pi\in S}d(\pi,\mu)-\frac{1}{3}.\]
Moreover, the equality holds if and only if
both $h_{\mu}^{\tau_1}(w_1Mw_1^T)=0$ with probability one and
$h_{\mu}^{\tau_2}(w_2Mw_2^T)=0$ with probability one.
Since all the eigenvalues of the matrix $M$ are positive,
$h_{\mu}^{\tau_1}(w_1Mw_1^T)=0$ if and only if
$h_{\mu}^{\tau_1}(B_1)=0$, $h_{\mu}^{\tau_1}(C_1)=0$, $h_{\mu}^{\tau_1}(D_1)=0$ and $h_{\mu}^{\tau_1}(E_1)=0$.
Since $A_1=B_1+C_1-D_1-E_1$, we conclude that
if the equality holds, then $h_{\mu}^{\tau_1}(A_1)=0$ with probability one.
A symmetric argument yields that
if the equality holds, then $h_{\mu}^{\tau_2}(A_2)=0$ with probability one.
The statement of the theorem now follows from Lemmas~\ref{lm:supp}, \ref{lm:A1} and \ref{lm:A2}.
\end{proof}

We next prove the second main theorem of this section.
Since the proofs of this theorem and the two subsequent to it are similar to the proof of Theorem~\ref{thm:set8a},
we will be brief in their parts that are analogous.

\begin{theorem}
\label{thm:set8b}
Let $S=\{1234,1432,2143,2341,3214,3412,4123,4321\}$.
It holds that
\[\sum_{\pi\in S}d(\pi,\mu)\ge\frac{1}{3}\]
for every permuton $\mu$, and equality holds if and only if $\mu$ is uniform.
\end{theorem}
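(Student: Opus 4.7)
The plan is to mirror the proof of Theorem~\ref{thm:set8a}. I would first set up an SDP whose dual certificate yields a collection of rooted expressions in $\cA^{\tau_1}$ (say $F_1, G_1, H_1, I_1$) together with their symmetric counterparts $F_2, G_2, H_2, I_2 \in \cA^{\tau_2}$, and a positive semidefinite matrix $M$, such that, setting $w_1=(F_1,G_1,H_1,I_1)$ and $w_2=(F_2,G_2,H_2,I_2)$,
\[
h_{\mu}\!\left(\unlab{w_1 M w_1^T}{\tau_1} + \unlab{w_2 M w_2^T}{\tau_2}\right) = c\left(\sum_{\pi \in S} d(\pi,\mu) - \frac{1}{3}\right)
\]
for some positive constant $c$. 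Positive semidefiniteness of $M$ then immediately yields $\sum_{\pi \in S} d(\pi, \mu) \ge 1/3$.

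For the equality case, I would argue as in Theorem~\ref{thm:set8a}. Equality forces $h_{\mu}^{\tau_1}(w_1 M w_1^T) = 0$ and $h_{\mu}^{\tau_2}(w_2 M w_2^T) = 0$ with probability one; provided the SDP yields a matrix $M$ with strictly positive eigenvalues (as was the case in Theorem~\ref{thm:set8a}), each individual coordinate of the rooted homomorphism applied to $w_1$ and $w_2$ must vanish almost surely. It then suffices to verify that $A_1$ lies in the integer span of $F_1, G_1, H_1, I_1$, and symmetrically that $A_2$ lies in the integer span of $F_2, G_2, H_2, I_2$, which is the analogue of the identity $A_1 = B_1 + C_1 - D_1 - E_1$ used in Theorem~\ref{thm:set8a}. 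This gives $h_{\mu}^{\tau_1}(A_1) = 0$ and $h_{\mu}^{\tau_2}(A_2) = 0$ with probability one, so Lemmas~\ref{lm:A1} and~\ref{lm:A2} apply, and Lemma~\ref{lm:supp} then concludes that $\mu$ is uniform.

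The main obstacle is locating a suitable ansatz for the rooted expressions so that both the target identity and the representation of $A_1$ (resp.\ $A_2$) in their span actually hold. The set $S$ has a clean cyclic-shift symmetry: $\{1234, 2341, 3412, 4123\}$ and $\{4321, 3214, 2143, 1432\}$ are two orbits under cyclic rotation of entries, one the reverse of the other. This symmetry suggests trying cyclic sums of differences of $\tau_1$-rooted four-permutations, structurally parallel to the cyclic form of $A_1$. Once the flags and $M$ are fixed, both the identity for the target pattern sum and the expressions of $A_1$ and $A_2$ in the relevant spans reduce to finite, mechanical computations in $\cA$ and $\cA^{\tau_i}$, so the remainder of the argument is brief and analogous to Theorem~\ref{thm:set8a}.
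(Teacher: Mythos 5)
Your proposal takes exactly the approach the paper uses: construct an SDP certificate $(w_1, w_2, M)$ in $\cA^{\tau_1}$ and $\cA^{\tau_2}$ with $M$ positive definite, read off the inequality from positive semidefiniteness, use strict positivity of the eigenvalues to force every coordinate of $w_1, w_2$ to vanish almost surely in the equality case, conclude that $h_{\mu}^{\tau_1}(A_1)=h_{\mu}^{\tau_2}(A_2)=0$ with probability one, and finish via Lemmas~\ref{lm:A1}, \ref{lm:A2} and \ref{lm:supp}. Two small remarks: the paper sidesteps the ``$A_1$ lies in the span'' step by simply including $A_1$ as the first coordinate of $w_1=(A_1,F_1,G_1)$ with a $3\times 3$ matrix $M$ (eigenvalues $9$ and $\tfrac{9\pm\sqrt{37}}{2}$), so no span computation is needed; and in any case what you require is real span, not integer span. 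The content you leave unsupplied is precisely the explicit certificate, namely the flags $F_1,G_1$ and the matrix $M$ together with the verification of the stated identity, which is the actual substance of the argument; your framing is otherwise correct and matches the paper.
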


\begin{proof}
Consider the following elements $F_1$ and $G_1$ of $\cA^{\tau_1}$.
\begin{align*}
F_1 & = \left(1\underline{24}3 - 3\underline{24}1\right) + \left(4\underline{13}2 - 2\underline{13}4\right) + \left(\underline{1}24\underline{3} - \underline{1}42\underline{3}\right)+ \left(\underline{2}31\underline{4} - \underline{2}13\underline{4}\right)\\
    & + \left(\underline{13}24 - \underline{13}42\right)+ \left(\underline{24}31 - \underline{24}13\right)+ \left(31\underline{24} - 13\underline{24}\right) + \left(24\underline{13} - 42\underline{13}\right)\\
G_1 & = \left(\underline{12}43 - \underline{12}34\right)+ \left(\underline{34}21 - \underline{34}12\right) + \left(\underline{14}32 - \underline{14}23\right)+ \left(\underline{23}14 - \underline{23}41\right)\\
    & + \left(43\underline{12} - 34\underline{12}\right)+ \left(21\underline{34} - 12\underline{34}\right)+ \left(32\underline{14} - 23\underline{14}\right) +\left(14\underline{23} - 41\underline{23}\right)\\
    & +\left(\underline{1}43\underline{2} - \underline{1}34\underline{2}\right) +\left(\underline{3}21\underline{4} - \underline{3}12\underline{4}\right)+\left(\underline{1}32\underline{4} - \underline{1}23\underline{4}\right)+ \left(\underline{2}14\underline{3} - \underline{2}41\underline{3}\right)\\
    & + \left(3\underline{12}4 - 4\underline{12}3\right)+ \left(1\underline{34}2 - 2\underline{34}1\right) + \left(2\underline{14}3 - 3\underline{14}2\right)+ \left(4\underline{23}1 - 1\underline{23}4\right)
\end{align*}    
Let $F_2$ and $G_2$ be the corresponding elements of $\cA^{\tau_2}$ as in the proof of Theorem~\ref{thm:set8a}, and
let $M$ be the following (positive definite) matrix.
\[M=\begin{bmatrix}
5&0&3\\
0&9&0\\
3&0&4
\end{bmatrix}\]
It holds that
\[h_{\mu}\left(\unlab{w_1Mw_1^T}{\tau_1}+\unlab{w_2Mw_2^T}{\tau_2}\right)=2\left(\sum_{\pi\in S}d(\pi,\mu)-\frac{1}{3}\right)\]
where $w_1=(A_1,F_1,G_1)$ and $w_2=(A_2,F_2,G_2)$.
This implies that
\[0\le\sum_{\pi\in S}d(\pi,\mu)-\frac{1}{3}\]
and
 equality holds if and only if
both $h_{\mu}^{\tau_1}(w_1Mw_1^T)=0$ with probability one and
$h_{\mu}^{\tau_2}(w_2Mw_2^T)=0$ with probability one.
Since all the eigenvalues of $M$ are positive (the eigenvalues are $9$ and $\frac{9\pm\sqrt{37}}{2}$),
it follows that equality holds if and only if
$h_{\mu}^{\tau_1}(A_1)=0$ with probability one and $h_{\mu}^{\tau_2}(A_2)=0$ with probability one.
The statement of the theorem now follows from Lemmas~\ref{lm:supp}, \ref{lm:A1} and \ref{lm:A2}.
\end{proof}

We next prove the third main theorem of this section.

\begin{theorem}
\label{thm:set8c}
Let $S=\{1324,1342,2413,2431,3124,3142,4213,4231\}$.
It holds that
\[\sum_{\pi\in S}d(\pi,\mu)\le\frac{1}{3}\]
for every permuton $\mu$, and equality holds if and only if $\mu$ is uniform.
\end{theorem}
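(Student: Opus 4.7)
The proof proceeds along the same lines as Theorems~\ref{thm:set8a} and~\ref{thm:set8b}, but with the inequality reversed: I would certify that $\frac{1}{3}-\sum_{\pi\in S}d(\pi,\mu)\ge 0$ by exhibiting a flag algebra sum-of-squares in the rooted algebras $\cA^{\tau_1}$ and $\cA^{\tau_2}$, and then use Lemmas~\ref{lm:A1}, \ref{lm:A2} and~\ref{lm:supp} to read off the equality case.

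Concretely, the plan is the following. I would look for vectors $w_1=(X_1^{(1)},\ldots,X_k^{(1)})\in(\cA^{\tau_1})^k$ and $w_2=(X_1^{(2)},\ldots,X_k^{(2)})\in(\cA^{\tau_2})^k$, together with a positive definite $k\times k$ matrix $M$, such that
\[
h_\mu\bigl(\unlab{w_1Mw_1^T}{\tau_1}+\unlab{w_2Mw_2^T}{\tau_2}\bigr)
= c\left(\frac{1}{3}-\sum_{\pi\in S}d(\pi,\mu)\right)
\]
for some constant $c>0$, chosen so that $A_1$ lies in the linear span of $X_1^{(1)},\ldots,X_k^{(1)}$ and $A_2$ lies in the linear span of $X_1^{(2)},\ldots,X_k^{(2)}$. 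As in the previous two proofs, such a certificate is produced by setting up an appropriate SDP (symmetry-reduced using those symmetries of the permutation matrix that preserve $S$) and rounding the numerical solution to exact rationals; the resulting $w_1,w_2,M$ are then to be recorded and the algebraic identity verified in the supplementary appendices.

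Once the certificate is in hand, the inequality is immediate from the positive semidefiniteness of $M$. For the equality case, $h_\mu\bigl(\unlab{w_iMw_i^T}{\tau_i}\bigr)=0$ forces $h_\mu^{\tau_i}(w_iMw_i^T)=0$ with probability one; since $M$ has only positive eigenvalues, each coordinate $X_j^{(i)}$ must evaluate to zero almost surely. Taking a suitable linear combination then yields $h_\mu^{\tau_1}(A_1)=0$ and $h_\mu^{\tau_2}(A_2)=0$ almost surely, so Lemmas~\ref{lm:A1} and~\ref{lm:A2} deliver the product-of-lengths measure formula for all pairs of points in $\supp\mu$, with matching and with opposite coordinate orderings, respectively. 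Together these cover every pair of points in $\supp\mu$, so Lemma~\ref{lm:supp} forces $\mu$ to be uniform, completing the argument.

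The main obstacle is, as in the earlier theorems, finding the correct flag algebra certificate. The reversed direction of the inequality precludes directly reusing the vectors $B_i,C_i,D_i,E_i$ or $F_i,G_i$ from Theorems~\ref{thm:set8a} and~\ref{thm:set8b}: the present set $S$ consists of patterns with one interior "crossing" rather than of pairs of a monotone permutation with its reverse, so the required rooted flags have a structurally different shape. In particular, the SDP must produce a matrix $M$ whose associated sum-of-squares actually coincides with a positive multiple of $\frac{1}{3}-\sum_{\pi\in S}\pi$ in $\cA$ (not the negative), which is delicate because the target has non-trivial coefficients on sixteen distinct $4$-permutations; verifying exactness after rounding, and ensuring that $A_1$ and $A_2$ remain in the span of the chosen flags so that the equality analysis goes through, is where the real work lies.
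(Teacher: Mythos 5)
Your plan is exactly the approach the paper takes: a rooted sum-of-squares certificate with a positive definite matrix $M$, arranged so that $A_1$ and $A_2$ lie in the span of the chosen flags, followed by Lemmas~\ref{lm:A1}, \ref{lm:A2} and~\ref{lm:supp} to pin down the equality case (the paper phrases the certificate as $\sum_{\pi\in\overline{S}}d(\pi,\mu)-\tfrac{2}{3}\ge 0$ with $\overline{S}=S_4\setminus S$, which is literally the same element of $\cA$ as your $\tfrac{1}{3}-\sum_{\pi\in S}d(\pi,\mu)$). The one thing you have not supplied is the certificate itself; the paper exhibits four rooted flags $H_1,I_1,J_1,K_1$ with $A_1=H_1-I_1$ and a $4\times4$ positive definite $M$, so your outline is sound but not yet a complete proof.
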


\begin{proof}
Let $\overline{S}=S_4\setminus S$ and consider the following four elements of $\cA^{\tau_1}$.
\begin{align*}
H_1 & = \left(1\underline{2}3\underline{4} - 3\underline{2}1\underline{4}\right) + \left(\underline{2}3\underline{4}1 - \underline{2}1\underline{4}3\right) +\left(1\underline{2}4\underline{3} - 4\underline{2}1\underline{3}\right) + \left(\underline{2}4\underline{3}1 - \underline{2}1\underline{3}4\right)\\
I_1 & = \left(2\underline{1}4\underline{3} - 4\underline{1}2\underline{3}\right) + \left(\underline{1}4\underline{3}2 - \underline{1}2\underline{3}4\right)+\left(1\underline{2}4\underline{3} - 4\underline{2}1\underline{3}\right) + \left(\underline{2}4\underline{3}1 - \underline{2}1\underline{3}4\right)\\
J_1 & = \left(\underline{2}13\underline{4} - \underline{2}31\underline{4}\right) + \left(13\underline{24} - 31\underline{24}\right) + \left(3\underline{24}1 - 1\underline{24}3\right) + \left(\underline{24}13 - \underline{24}31\right)\\
  & +\left(4\underline{23}1 - 1\underline{23}4\right) + \left(14\underline{23} - 41\underline{23}\right) +\left(\underline{23}14 - \underline{23}41\right) +\left(\underline{2}14\underline{3} - \underline{2}41\underline{3}\right)\\
K_1 & = \left(24\underline{13} - 42\underline{13}\right) + \left(4\underline{13}2 - 2\underline{13}4\right)+\left(\underline{1}24\underline{3} - \underline{1}42\underline{3}\right) + \left(\underline{13}24 - \underline{13}42\right)\\
  & +\left(4\underline{23}1 - 1\underline{23}4\right) + \left(14\underline{23} - 41\underline{23}\right) +\left(\underline{23}14 - \underline{23}41\right) +\left(\underline{2}14\underline{3} - \underline{2}41\underline{3}\right)
\end{align*}    
Further, let $H_2$, $I_2$, $J_2$ and $K_2$ be the corresponding elements of $\cA^{\tau_2}$ as in the proof of Theorem~\ref{thm:set8a}, and
let $M$ be the following (positive definite) matrix.
\[M=\begin{bmatrix}
35&0&12&0\\
0&35&0&-12\\
12&0&37&0\\
0&-12&0&37\\
\end{bmatrix} \]
It holds that
\[h_{\mu}\left(\unlab{w_1Mw_1^T}{\tau_1}+\unlab{w_2Mw_2^T}{\tau_2}\right)=16\left(\sum_{\pi\in\overline{S}}d(\pi,\mu)-\frac{2}{3}\right)\]
where $w_1=(H_1,I_1,J_1,K_1)$ and $w_2=(H_2,I_2,J_2,K_2)$.
This implies that
\[0\le\sum_{\pi\in\overline{S}}d(\pi,\mu)-\frac{2}{3}\]
and equality holds if and only if
both $h_{\mu}^{\tau_1}(w_1Mw_1^T)=0$ with probability one and
$h_{\mu}^{\tau_2}(w_2Mw_2^T)=0$ with probability one.
Since all the eigenvalues of $M$ are positive (the matrix has eigenvalues $36+\sqrt{145}$ and $36-\sqrt{145}$, each with multiplicity two),
$h_{\mu}^{\tau_1}(w_1Mw_1^T)=0$ if and only if
$h_{\mu}^{\tau_1}(H_1)=0$, $h_{\mu}^{\tau_1}(I_1)=0$, $h_{\mu}^{\tau_1}(J_1)=0$ and $h_{\mu}^{\tau_1}(K_1)=0$.
Hence, if equality holds, then $h_{\mu}^{\tau_1}(A_1)=0$ with probability one (note that $A_1=H_1-I_1$).
A symmetric argument yields that $h_{\mu}^{\tau_2}(A_2)=0$ with probability one.
The statement of the theorem now follows from Lemmas~\ref{lm:supp}, \ref{lm:A1} and \ref{lm:A2}.
\end{proof}

Finally, we prove the last main theorem of this section.

\begin{theorem}
\label{thm:set12}
Let $S=\{1234,1243,1432,2134,2143,2341,3214,3412,3421,4123$, $4312,4321\}$.
It holds that
\[\sum_{\pi\in S}d(\pi,\mu)\ge\frac{1}{2}\]
for every permuton $\mu$, and equality holds if and only if $\mu$ is uniform.
\end{theorem}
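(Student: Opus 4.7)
My plan is to imitate the flag-algebra templates already used for Theorems~\ref{thm:set8a}, \ref{thm:set8b} and~\ref{thm:set8c}. Concretely, I would exhibit a collection $L_1^{(1)},\ldots,L_k^{(1)}$ of elements of $\cA^{\tau_1}$ (and their natural counterparts $L_1^{(2)},\ldots,L_k^{(2)}$ in $\cA^{\tau_2}$ obtained by replacing each rooted $4$-permutation by its mirror along the horizontal axis) together with a positive definite $k\times k$ real matrix $M$, such that with $w_1=(L_1^{(1)},\ldots,L_k^{(1)})$ and $w_2=(L_1^{(2)},\ldots,L_k^{(2)})$ we have
\[
h_\mu\!\left(\unlab{w_1 M w_1^T}{\tau_1}+\unlab{w_2 M w_2^T}{\tau_2}\right) \;=\; c\left(\sum_{\pi\in S} d(\pi,\mu) - \frac{1}{2}\right)
\]
for some positive constant $c$. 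The left hand side is nonnegative because $M$ is positive semidefinite, and this immediately yields the claimed inequality.

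For the equality case, I would use that both $\unlab{w_i M w_i^T}{\tau_i}$ have nonnegative value, so equality forces $h_\mu^{\tau_i}(w_i M w_i^T)=0$ with probability one for $i=1,2$. Since all eigenvalues of $M$ are strictly positive, this degeneracy is equivalent to $h_\mu^{\tau_i}(L_j^{(i)})=0$ with probability one for every $j\in[k]$ and $i\in\{1,2\}$. The final, and crucial, ingredient is to choose the $L_j^{(i)}$ so that the element $A_1$ defined earlier lies in the real span of $L_1^{(1)},\ldots,L_k^{(1)}$ (and similarly $A_2$ in the span of $L_1^{(2)},\ldots,L_k^{(2)}$). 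Then $h_\mu^{\tau_1}(A_1)=0$ and $h_\mu^{\tau_2}(A_2)=0$ with probability one, and Lemmas~\ref{lm:A1}, \ref{lm:A2} and~\ref{lm:supp} combine to give that $\mu$ is the uniform measure. Conversely, if $\mu$ is uniform then every $4$-permutation has density $1/24$ and $\sum_{\pi\in S}d(\pi,\mu)=12/24=1/2$.

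The routine but essential step is the verification of the identity
\[
\unlab{w_1 M w_1^T}{\tau_1}+\unlab{w_2 M w_2^T}{\tau_2} \;=\; c\left(\sum_{\pi\in S}\pi - \frac{1}{2}\sum_{\pi\in S_4}\pi\right),
\]
which reduces to expanding each $L_j^{(i)}\cdot L_{j'}^{(i)}$ as a linear combination of rooted $4$-permutations using the flag-algebra product, averaging over the choice of root, and collecting coefficients. This is mechanical once the $L_j^{(i)}$ and $M$ are in hand; by the symmetry between $\tau_1$ and $\tau_2$, only the $\tau_1$-side actually needs to be computed.

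The main obstacle is finding the data $(L_j^{(i)}, M)$: it is not obvious a priori that the SDP associated with minimizing $\sum_{\pi\in S} d(\pi,\mu)$ has a dual certificate whose zero locus is cut out by (linear combinations containing) $A_1$ and $A_2$. I would therefore proceed by first solving the relevant SDP numerically, then rounding the solution to a rational (indeed integer) matrix $M$ while keeping it positive definite and the identity above exact, and finally checking that $A_1$ and $A_2$ lie in the appropriate spans. Once these pieces are assembled, the remainder of the argument is a direct copy of the proof of Theorem~\ref{thm:set8a}, invoking the three lemmas at the end.
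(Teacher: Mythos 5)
Your proposal is exactly the approach the paper takes: construct vectors of rooted linear combinations $w_1\in(\cA^{\tau_1})^k$ and $w_2\in(\cA^{\tau_2})^k$ and a positive definite matrix $M$ (in the paper, $k=5$ with $w_1=(A_1,L_1,M_1,N_1,O_1)$ and $c=172$) so that $h_\mu\bigl(\unlab{w_1Mw_1^T}{\tau_1}+\unlab{w_2Mw_2^T}{\tau_2}\bigr)=c\bigl(\sum_{\pi\in S}d(\pi,\mu)-\tfrac12\bigr)$, then use positive definiteness to force $h_\mu^{\tau_i}$ to vanish on each coordinate in the equality case and invoke Lemmas~\ref{lm:supp}, \ref{lm:A1} and~\ref{lm:A2}. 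The paper arranges for $A_1$ to literally be the first entry of $w_1$ (rather than merely lying in the span), but that is a cosmetic strengthening of your plan; the remaining work you defer --- solving the SDP, rounding to an exact rational positive definite $M$, and verifying the linear identity --- is precisely what the paper carries out and records in the displayed data.
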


\begin{proof}
Consider the following four elements of $\cA^{\tau_1}$.
\begin{align*}
L_1 & =\left(4\underline{2}1\underline{3} - 1\underline{2}4\underline{3}\right) + \left(4\underline{1}2\underline{3} - 2\underline{1}4\underline{3}\right) + \left(\underline{2}3\underline{4}1 - \underline{2}1\underline{4}3\right) \\
  & + \left(4\underline{23}1 - 1\underline{23}4\right) + \left(\underline{1}2\underline{3}4 - \underline{1}4\underline{3}2\right) + \left(3\underline{24}1 - 1\underline{24}3\right)\\
M_1 & =\left(\underline{2}1\underline{3}4 - \underline{2}4\underline{3}1\right) + \left(1\underline{23}4 - 4\underline{23}1\right) +  \left(\underline{1}2\underline{3}4 - \underline{1}4\underline{3}2\right) \\
  & + \left(1\underline{24}3 - 3\underline{24}1\right) +\left(2\underline{13}4 - 4\underline{13}2\right) + \left(3\underline{24}1 - 1\underline{24}3\right)\\
N_1 & = \left(\underline{12}43 - \underline{12}34\right) + \left(21\underline{34} - 12\underline{34}\right) + \left(\underline{1}32\underline{4} - \underline{1}23\underline{4}\right) +\left(\underline{2}14\underline{3} - \underline{2}41\underline{3}\right)\\
  & + \left(2\underline{14}3 - 3\underline{14}2\right) + \left(\underline{23}14 - \underline{23}41\right) +\left(32\underline{14} - 23\underline{14}\right) + \left(\underline{1}43\underline{2} - \underline{1}34\underline{2}\right)\\
  & + \left(1\underline{34}2 - 2\underline{34}1\right) +\left(\underline{3}21\underline{4} - \underline{3}12\underline{4}\right) + \left(3\underline{12}4 - 4\underline{12}3\right) + \left(\underline{34}21 - \underline{34}12\right) \\
  & +\left(43\underline{12} - 34\underline{12}\right) + \left(\underline{14}32 - \underline{14}23\right) + \left(14\underline{23} - 41\underline{23}\right) + \left(4\underline{23}1 - 1\underline{23}4\right) \\
O_1 & = \left(\underline{1}42\underline{3} - \underline{1}24\underline{3}\right) + \left(\underline{13}42 - \underline{13}24\right) + \left(13\underline{24} - 31\underline{24}\right) +\left(\underline{24}13 - \underline{24}31\right)\\
  & + \left(42\underline{13} - 24\underline{13}\right) + \left(\underline{2}13\underline{4} - \underline{2}31\underline{4}\right) +\left(2\underline{13}4 - 4\underline{13}2\right) + \left(3\underline{24}1 - 1\underline{24}3\right)
\end{align*}
Further, let $L_2$, $M_2$, $N_2$ and $O_2$ be the corresponding elements of $\cA^{\tau_2}$ as in the proof of Theorem~\ref{thm:set8a}, and
let $M$ be the following (positive definite) matrix.
\[M=\begin{bmatrix}
1132&-652&-638& 197& 326\\
-652& 774& 516&-68 &-326\\
-638& 516& 774& 68 &-326\\
197 &-68 & 68 & 172& 0  \\
326 &-326&-326& 0  & 516
\end{bmatrix}\]
It holds that
\[h_{\mu}\left(\unlab{w_1Mw_1^T}{\tau_1}+\unlab{w_2Mw_2^T}{\tau_2}\right)=172\left(\sum_{\pi\in S}d(\pi,\mu)-\frac{1}{2}\right)\]
where $w_1=(A_1,L_1,M_1,N_1,O_1)$ and $w_2=(A_2,L_2,M_2,N_2,O_2)$.
This implies that
\[0\le\sum_{\pi\in S}d(\pi,\mu)-\frac{1}{2}\]
and equality holds if and only if
both $h_{\mu}^{\tau_1}(w_1Mw_1^T)=0$ with probability one and
$h_{\mu}^{\tau_2}(w_2Mw_2^T)=0$ with probability one.
Since all the eigenvalues of $M$ are positive (because all the leading principal minors of $M$ are positive),
it follows that equality holds if and only if both $h_{\mu}^{\tau_1}(A_1)=0$
with probability one and $h_{\mu}^{\tau_2}(A_2)=0$ with probability one.
The statement of the theorem now follows from Lemmas~\ref{lm:supp}, \ref{lm:A1} and \ref{lm:A2}.
\end{proof}

\section{Perturbations of the uniform permuton}
\label{sec:perturbe}

In this section, we analyze pattern densities in step permutons obtained from the uniform permuton by a perturbation.
This analysis will yield that most of the sets different from those listed in Theorem~\ref{thm:main}
are not $\Sigma$-forcing.

We start with the definition of a step permuton.
If $A$ is a (non-negative) doubly stochastic square matrix of order $n$,
i.e., each row sum and each column sum of $A$ is equal to one,
we can associate with it a permuton $\mu[A]$ by setting
\[\mu[A](X):=\sum_{i,j\in [n]} A_{ij}\cdot n\cdot
                              \left\lvert X\;\cap\;\left[\frac{i-1}{n},\frac{i}{n}\right)\times\left[\frac{j-1}{n},\frac{j}{n}\right)\right\rvert\]
for every Borel set $X\subseteq [0,1]^2$.
We refer to permutons that can be obtained in this way from a doubly stochastic square matrix as \emph{step permutons}.
A straightforward computation yields the following expression for the density of a $k$-permutation $\pi$ in $\mu[A]$;
we use $f:[k]\nearrow[n]$ to mean that $f$ is a non-decreasing function from $[k]$ to $[n]$.
Indeed, each of the summands corresponds to the probability that
the $\mu[A]$-random permutation of order $k$ is $\pi$ and
the $k$ points defining $\pi$ are sampled from the squares with coordinates $(f(i),g(\pi(i)))$, $i\in [n]$.
\begin{lemma}
\label{lm:dens}
Let $A$ be a doubly stochastic square matrix of order $n$, and $\pi$ a $k$-permutation.
It holds that
\[d(\pi,\mu[A])=\frac{k!}{n^k}\sum_{f,g:[k]\nearrow [n]}
  \frac{1}{\prod\limits_{i\in [n]}\lvert f^{-1}(i)\rvert!\cdot\lvert g^{-1}(i)\rvert!}\times\prod_{i\in [k]}A_{f(i),g(\pi(i))}.\]
\end{lemma}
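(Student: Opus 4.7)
The plan is to compute $d(\pi, \mu[A])$ directly from its probabilistic definition, by partitioning the sample space of $k$ i.i.d.\ draws from $\mu[A]$ according to the cell each sampled point lands in. From the definition of $\mu[A]$, the density with respect to Lebesgue measure equals $nA_{c,r}$ on the cell $[(c-1)/n, c/n) \times [(r-1)/n, r/n)$. By the exchangeability of the $k$ samples, $d(\pi, \mu[A])$ equals $k!$ times the integral over the region $X_1 < X_2 < \cdots < X_k$ of the joint density times the indicator that the induced permutation equals $\pi$.

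The key parametrization identifies each such configuration with a pair of non-decreasing functions $f,g:[k]\to[n]$: let $f(i)$ be the column of the point of $x$-rank $i$, and $g(j)$ the row of the point of $y$-rank $j$. The induced permutation equals $\pi$ precisely when the point of $x$-rank $i$ has $y$-rank $\pi(i)$, i.e.\ when that point occupies the cell $(f(i), g(\pi(i)))$. Summing over pairs $(f,g)$ of non-decreasing maps therefore partitions the event ``samples are $x$-sorted and induced permutation is $\pi$'' into disjoint contributions, one per $(f,g)$.

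For each fixed $(f,g)$, the integrand is $n^k\prod_{i}A_{f(i),g(\pi(i))}$, so only the Lebesgue volume of the admissible region remains to be found. Within each column $c$, the $|f^{-1}(c)|$ many $x$-coordinates assigned to that column must lie in sorted order inside an interval of length $1/n$, contributing volume $(1/n)^{|f^{-1}(c)|}/|f^{-1}(c)|!$; taking the product over $c$ gives a total $x$-volume of $1/\bigl(n^k\prod_c|f^{-1}(c)|!\bigr)$. A symmetric computation on the $y$-side (after the substitution $Z_j=Y_{\pi^{-1}(j)}$) yields $1/\bigl(n^k\prod_r|g^{-1}(r)|!\bigr)$. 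Collecting all factors gives
\[\frac{k!}{n^k}\cdot\frac{\prod_{i\in[k]}A_{f(i),g(\pi(i))}}{\prod_{i\in[n]}|f^{-1}(i)|!\,|g^{-1}(i)|!}\]
as the contribution from $(f,g)$, and summing over $f,g:[k]\nearrow[n]$ yields the lemma. The main obstacle is purely bookkeeping: correctly pairing the $k!$ from exchangeability with the $1/\prod_i|f^{-1}(i)|!$ and $1/\prod_i|g^{-1}(i)|!$ factors arising from the ``sorted within each strip'' constraints, and verifying that cell $(f(i),g(\pi(i)))$ really is occupied by the point of $x$-rank $i$ exactly when the induced permutation equals $\pi$.
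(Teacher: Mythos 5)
Your argument is correct and matches the paper's intended computation, which the paper only sketches in one sentence before the lemma: partition the event by the pair of nondecreasing cell-index functions $(f,g)$, so that each summand is the probability of the $k$ points landing in the prescribed cells in the prescribed order. The bookkeeping you flag as the main obstacle is handled correctly: the $k!$ from exchangeability, the factor $n^k\prod_iA_{f(i),g(\pi(i))}$ from the piecewise-constant density, and the two simplex-volume factors $1/\bigl(n^k\prod_c|f^{-1}(c)|!\bigr)$ and $1/\bigl(n^k\prod_r|g^{-1}(r)|!\bigr)$ combine to give exactly the stated summand.
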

\noindent For $i,j\in [n-1]$, let $B^{ij}$ be the matrix such that
\[B^{ij}_{i'j'}=
  \begin{cases}
  +1 & \mbox{if either $i'=i$ and $j'=j$ or $i'=i+1$ and $j'=j+1$,} \\
  -1 & \mbox{if either $i'=i$ and $j'=j+1$ or $i'=i+1$ and $j'=j$, and} \\
  0 & \mbox{otherwise.}
  \end{cases}
	\]
In the following exposition, the order $n$ of the matrices $B^{ij}$ will always be clear from the context and
so we use only the indices $i$ and $j$ to avoid unnecessarily complex notation.

For an integer $n$ and a permutation $\pi$, we define a function $h_{\pi,n}:U_n\to\RR$
on the cube $U_n:=\{\vec x\in \RR^{[n-1]^2}:\lVert \vec x\rVert_\infty \le 1/4n\}$ around the origin as
\[h_{\pi,n}(x_{1,1},\ldots,x_{n-1,n-1}):=d\left(\pi,\mu\left[A+\sum_{i,j\in [n-1]}x_{ij}B^{ij}\right]\right)\]
where $A$ is the $n\times n$ matrix with all entries equal to $1/n$. 
Note that this is well-defined as $A+\sum_{i,j\in [n-1]}x_{ij}B^{ij}$ is doubly stochastic 
whenever $x_{i,j}\in U_n$ for all $i,j\in [n-1]$.
More generally, if $S$ is a set of permutations, we define $h_{S,n}:U_n\to\RR$ as
\[h_{S,n}(\vec x):=\sum_{\pi\in S}h_{\pi,n}(\vec x)\;\mbox{.}\]
In this section, we are concerned with sets $S$ that consist of $4$-permutations only.

If $S$ is a set of $4$-permutations, we define the \emph{cover matrix} of $S$ to be a $4\times 4$ matrix $C^S$ such that
$C^S_{ij}$ is the number of permutations $\pi\in S$ such that $\pi(j)=i$.
If the set $S$ is clear from the context, then we will just write $C$ for the cover matrix.
We show that the gradient of $h_{S,n}$ at the origin is determined by the cover matrix of $S$.

\begin{lemma}
\label{lm:cover}
Let $n$ be an integer and $S$ a set of $4$-permutations with cover matrix $C$.
It holds that
\begin{align*}
 \frac{\partial}{\partial x_{ij}}h_{S,n}(0,\ldots,0) = & 
  \frac{4!}{n^7}\sum_{f,g:[4]\nearrow [n]}
  \frac{1}{\prod\limits_{m\in [n]}\lvert f^{-1}(m)\rvert!\cdot\lvert g^{-1}(m)\rvert!}\times
  \left(\sum_{\substack{k\in f^{-1}(i)\\\ell\in g^{-1}(j)}}C_{k,\ell}\right. \\
  & \left. -\sum_{\substack{k\in f^{-1}(i+1)\\\ell\in g^{-1}(j)}}C_{k,\ell}
             -\sum_{\substack{k\in f^{-1}(i)\\\ell\in g^{-1}(j+1)}}C_{k,\ell}
	     +\sum_{\substack{k\in f^{-1}(i+1)\\\ell\in g^{-1}(j+1)}}C_{k,\ell}\right)
\end{align*}
for every $i,j\in [n-1]$.
\end{lemma}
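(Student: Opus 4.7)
The plan is to reduce the derivative computation to a direct application of Lemma~\ref{lm:dens} together with the product rule. Setting $k=4$ and $\tilde{A}:=A+\sum_{i',j'\in[n-1]}x_{i'j'}B^{i'j'}$ in Lemma~\ref{lm:dens} gives an explicit polynomial formula
\[h_{\pi,n}(\vec{x})=\frac{4!}{n^{4}}\sum_{f,g:[4]\nearrow[n]}\frac{1}{\prod_{m\in[n]}|f^{-1}(m)|!\,|g^{-1}(m)|!}\prod_{m\in[4]}\tilde{A}_{f(m),g(\pi(m))},\]
in which only the inner product of four matrix entries depends on $\vec{x}$. First I would apply the product rule to this inner product and evaluate at $\vec{x}=\vec{0}$, where every entry of $A$ equals $1/n$; this yields
\[\frac{\partial}{\partial x_{ij}}\prod_{m\in[4]}\tilde{A}_{f(m),g(\pi(m))}\bigg|_{\vec{x}=\vec{0}}=\sum_{m=1}^{4}B^{ij}_{f(m),g(\pi(m))}\prod_{m'\ne m}A_{f(m'),g(\pi(m'))}=\frac{1}{n^{3}}\sum_{m=1}^{4}B^{ij}_{f(m),g(\pi(m))}.\]

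Next I would expand the remaining sum using the explicit $\pm1$-pattern of $B^{ij}$, whose nonzero entries are $+1$ at $(i,j)$ and $(i+1,j+1)$ and $-1$ at $(i,j+1)$ and $(i+1,j)$. Writing $N^{\pi}_{a,b}:=|\{m\in[4]:f(m)=a,\,g(\pi(m))=b\}|$, this gives
\[\sum_{m\in[4]}B^{ij}_{f(m),g(\pi(m))}=N^{\pi}_{i,j}-N^{\pi}_{i+1,j}-N^{\pi}_{i,j+1}+N^{\pi}_{i+1,j+1}.\]
Summing over $\pi\in S$ and exchanging the order of summation, one obtains
\[\sum_{\pi\in S}N^{\pi}_{a,b}=\sum_{k\in f^{-1}(a)}\sum_{\ell\in g^{-1}(b)}|\{\pi\in S:\pi(k)=\ell\}|,\]
and the innermost cardinality is, by definition, an entry of the cover matrix $C^{S}$.

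Finally I would collect the four corner contributions with the appropriate signs and incorporate the combinatorial prefactor $4!/n^{4}$ from Lemma~\ref{lm:dens} together with the factor $1/n^{3}$ arising from the three unperturbed copies of $A$; these combine to the constant $4!/n^{7}$ appearing in the statement, and the remaining structure matches the claimed alternating double sum exactly. I do not anticipate any real obstacle: the argument is a clean product-rule computation followed by a routine reorganization of the sum, and the only point requiring care is aligning the index conventions used in the definition of $C^{S}$ with the roles of $k$ and $\ell$ as positions and values of the permutations $\pi\in S$.
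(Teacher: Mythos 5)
Your proposal follows essentially the same route as the paper's very terse proof, which reduces to a single $\pi\in S$ by additivity and states that the formula then ``follows directly from Lemma~\ref{lm:dens}''; you simply make the underlying product-rule computation explicit, and that computation is sound. However, the index alignment that you flag at the end does not actually close up as cleanly as you assert. In your penultimate display, $k\in f^{-1}(a)$ is a \emph{position} and $\ell\in g^{-1}(b)$ is a \emph{value} (it arises as $\ell=\pi(k)$), so the innermost cardinality $\lvert\{\pi\in S:\pi(k)=\ell\}\rvert$ equals $C_{\ell,k}$ rather than $C_{k,\ell}$, because the paper defines $C^S_{ij}$ as the number of $\pi\in S$ with $\pi(j)=i$. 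Hence your derivation yields the claimed formula with $C$ replaced by $C^{T}$ (equivalently, with the roles of $i$ and $j$ interchanged throughout). This transposition appears to be present in the lemma as stated; it is immaterial for the downstream Lemmas~\ref{lm:easy} and~\ref{lm:large}, since the only property they extract is whether $C$ is constant, which is invariant under transposition, but a careful write-up should note and resolve the mismatch rather than assert that the computed sum matches the lemma's expression ``exactly''.
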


\begin{proof}
Since both the first derivative on the left hand side and 
the expression on the right hand side in the statement of the lemma
are additive with respect to adding elements of the set $S$,
it is enough to prove the lemma when $S$ contains a single element $\pi$.
In such case, the formula given in the statement of the lemma follows directly from Lemma~\ref{lm:dens}.
\end{proof}

Lemma~\ref{lm:cover} yields the following.

\begin{lemma}
\label{lm:easy}
Let $n$ be an integer and $S$ a set of $4$-permutations.
If the cover matrix $C$ is constant,
then the gradient
\[\nabla h_{S,n}(0,\ldots,0)=\left(\frac{\partial}{\partial x_{ij}}h_{S,n}(0,\ldots,0)\right)_{i,j\in [n-1]}\]
is zero.
\end{lemma}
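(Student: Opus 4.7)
The plan is to substitute the hypothesis $C_{k,\ell}=c$ (constant) directly into the formula provided by Lemma~\ref{lm:cover} and exploit the resulting factorization. When each $C_{k,\ell}$ equals the same constant $c$, the four inner sums over $k$ and $\ell$ in the bracketed expression collapse to products of preimage cardinalities. Concretely, $\sum_{k\in f^{-1}(i),\,\ell\in g^{-1}(j)}C_{k,\ell} = c\,|f^{-1}(i)|\cdot|g^{-1}(j)|$, and similarly for the other three terms, so the bracket becomes
\[
c\bigl(|f^{-1}(i)|-|f^{-1}(i+1)|\bigr)\bigl(|g^{-1}(j)|-|g^{-1}(j+1)|\bigr).
\]

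With this factorization in hand, the double sum over $f,g:[4]\nearrow[n]$ splits as a product of two single sums. Hence it suffices to prove that
\[
T_i := \sum_{f:[4]\nearrow [n]}\frac{|f^{-1}(i)|-|f^{-1}(i+1)|}{\prod_{m\in[n]}|f^{-1}(m)|!} = 0
\]
for every $i\in[n-1]$; the same identity with $g$ and $j$ then kills the remaining factor. This is the only non-trivial step of the argument, and I expect it to be the main (though still mild) obstacle.

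To establish $T_i=0$, I would use a sign-reversing involution. A non-decreasing function $f:[4]\nearrow[n]$ is encoded by its multiplicity vector $\bigl(|f^{-1}(1)|,\ldots,|f^{-1}(n)|\bigr)$, which is an arbitrary tuple of non-negative integers summing to $4$. Define $\phi_i$ to be the involution that swaps the $i$-th and $(i+1)$-th entries of this vector; this produces another valid non-decreasing function, so $\phi_i$ is a bijection on the index set of the sum. Under $\phi_i$ the denominator $\prod_m |f^{-1}(m)|!$ is invariant while $|f^{-1}(i)|$ and $|f^{-1}(i+1)|$ are exchanged, so the summand changes sign. Pairing $f$ with $\phi_i(f)$ therefore shows $T_i=0$, and combining this with the factorization from the previous paragraph gives that every partial derivative $\partial h_{S,n}/\partial x_{ij}$ vanishes at the origin, completing the proof.
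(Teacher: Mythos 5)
Your proof is correct and uses essentially the same idea as the paper: a sign-reversing involution on non-decreasing functions $f:[4]\nearrow[n]$ that swaps the $i$-th and $(i+1)$-th multiplicities (the paper calls it $\tilde f^{(i)}$), leaving the factorial denominator fixed while negating the numerator. You additionally make explicit the factorization of the bracket into $c\bigl(|f^{-1}(i)|-|f^{-1}(i+1)|\bigr)\bigl(|g^{-1}(j)|-|g^{-1}(j+1)|\bigr)$ and the splitting of the double sum into a product; the paper instead fixes $g$ and pairs $f$ with $\tilde f^{(i)}$ directly, but the cancellation mechanism is identical.
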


\begin{proof}
We start by defining an operator on non-decreasing functions from $[4]$ to $[n]$. 
Given $f:[4]\nearrow [n]$ and an index $k\in[n-1]$, we define $\tilde f^{(k)}$ as follows. 
Let $Z$ be the image of $f$ viewed as a multiset with every $k$ replaced with $k+1$ and every $k+1$ replaced with $k$.
Then $\tilde f^{(k)}$ is the unique non-decreasing function from $[4]$ to $[n]$ whose image is $Z$.
Informally speaking, we switch the values $k$ and $k+1$ and reorder to obtain a non-decreasing function.
Note that $f=\widetilde{(\tilde f^{(k)})}^{\raisebox{-5pt}{\scriptsize $(k)$}}$ for all $f$ and $k$, and
$f=\tilde f^{(k)}$ if $\lvert f^{-1}(k)\rvert=\lvert f^{-1}(k+1)\rvert$.

We now analyze individual summands in the sum in the statement of Lemma~\ref{lm:cover}.
Fix two indices $i$ and $j$, and a function $g:[4]\nearrow [n]$.
If $f=\tilde f^{(i)}$, then the expression in the parenthesis evaluates to zero.
If $f\not=\tilde f^{(i)}$, then the expressions for $f$ and $\tilde f^{(i)}$ have opposite signs, in particular their contributions cancel out.
We conclude that the sum is equal to zero if all the entries of the cover matrix $C$ are the same.
The lemma now follows.
\end{proof}

Lemma~\ref{lm:cover} establishes that the gradient $\nabla h_{S,n}(0,\ldots,0)$ for a set $S$ of $4$-permu\-ta\-tions
is a linear function of the entries of the cover matrix $C$ of $S$.
Analyzing the matrix corresponding to this linear function for $n\in\{4,5\}$ yields that for such $n$
the gradient $\nabla h_{S,n}(0,\ldots,0)$ is zero if and only if the cover matrix of $S$ is constant.
Instead of providing this technical computation here,
we give a more illustrative proof that the converse of Lemma~\ref{lm:easy} holds for large enough integers $n$,
as this is sufficient for our exposition.

\begin{lemma}
\label{lm:large}
Let $S$ be a set of $4$-permutations whose cover matrix is not constant. Then there exists an integer $n$ such that the gradient
\[\nabla h_{S,n}(0,\ldots,0)=\left(\frac{\partial}{\partial x_{ij}}h_{S,n}(0,\ldots,0)\right)_{i,j\in [n-1]}\]
is non-zero.
\end{lemma}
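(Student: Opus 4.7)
The plan is to analyze the partial derivative in Lemma~\ref{lm:cover} asymptotically as $n\to\infty$. Let $\beta_r(t):=\tfrac{4!}{(r-1)!(4-r)!}t^{r-1}(1-t)^{4-r}$ denote the density of the $r$-th order statistic of four iid uniform random variables on $[0,1]$, and set $I_r(m):=\int_{(m-1)/n}^{m/n}\beta_r(t)\,dt$. An alternative (and equivalent) derivation of $\partial h_{S,n}/\partial x_{ij}(0)$---obtained by re-expressing $d(\pi,\mu[A])$ as an integral over $[0,1]^{8}$ and linearizing at the uniform density $\rho\equiv 1$---yields the clean identity
\[\frac{\partial h_{S,n}}{\partial x_{ij}}(0,\ldots,0)=\frac{n}{4!}\sum_{r,s=1}^{4}C_{s,r}\bigl(I_r(i)-I_r(i+1)\bigr)\bigl(I_s(j)-I_s(j+1)\bigr).\]
For any $(x_0,y_0)\in(0,1)^{2}$ with $i=\lfloor x_0n\rfloor$ and $j=\lfloor y_0n\rfloor$, Taylor expansion of $F_r(t):=\int_0^t\beta_r(u)\,du$ gives $I_r(i)-I_r(i+1)=-\beta'_r(x_0)/n^{2}+O(n^{-3})$, and therefore
\[\frac{\partial h_{S,n}}{\partial x_{ij}}(0,\ldots,0)=\frac{G(x_0,y_0)}{4!\,n^{3}}+o(n^{-3}),\qquad G(x,y):=\sum_{r,s=1}^{4}C_{s,r}\,\beta'_r(x)\,\beta'_s(y).\]
It thus suffices to show that if $C$ is not constant, then $G$ is not identically zero on $[0,1]^{2}$: one may then pick $(x_0,y_0)\in(0,1)^2$ with $G(x_0,y_0)\ne 0$ and choose $n$ sufficiently large.

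The identity $\sum_{r=1}^{4}\beta_r(t)\equiv 4$ (equivalent to $\EE\sum_r\phi(X_{(r)})=4\int_0^1\phi$ for iid uniform $X_1,\ldots,X_4$ and arbitrary test function $\phi$) yields $\sum_{r=1}^{4}\beta'_r\equiv 0$. A direct determinant computation in the monomial basis $(1,t,t^2)$ shows that $\beta'_1,\beta'_2,\beta'_3$ are linearly independent polynomials of degree two, so the nine products $\beta'_r(x)\beta'_s(y)$ with $r,s\in[3]$ are linearly independent on $[0,1]^{2}$. Substituting $\beta'_4=-(\beta'_1+\beta'_2+\beta'_3)$ in both variables of $G$ gives
\[G(x,y)=\sum_{r,s\in[3]}\bigl(C_{s,r}-C_{s,4}-C_{4,r}+C_{4,4}\bigr)\,\beta'_r(x)\,\beta'_s(y),\]
so $G\equiv 0$ is equivalent to the relation $C_{s,r}=C_{s,4}+C_{4,r}-C_{4,4}$ for all $r,s\in[3]$.

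Finally, the cover matrix satisfies $\sum_s C_{s,r}=\lvert S\rvert=\sum_r C_{s,r}$ for every $r$ and $s$ (immediate from the definition, since each permutation has exactly one value in each position). Substituting the displayed relation into the column-$r$ sum for $r\in[3]$, and using $\sum_{s\in[3]}C_{s,4}=\lvert S\rvert-C_{4,4}$ from the column-$4$ sum, forces $C_{4,r}=C_{4,4}$ after a short calculation; symmetrically, $C_{s,4}=C_{4,4}$ for all $s\in[3]$; the displayed relation then also gives $C_{s,r}=C_{4,4}$ for all $r,s\in[3]$. Hence $C$ is constant, contradicting the hypothesis of the lemma. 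The main obstacle is justifying the asymptotic expansion of the partial derivative, which requires either a careful regrouping of the sum in Lemma~\ref{lm:cover} according to the image sizes of $f$ and $g$, or, more cleanly, a first-variation computation in the permuton-density formulation; the rest of the argument is elementary linear algebra.
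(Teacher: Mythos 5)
Your proof is correct, and it takes a genuinely different route from the paper's. The paper works combinatorially from Lemma~\ref{lm:cover}: for each of nine judiciously chosen positions $(i,j)$ (corners, edge midpoints, centre), it pairs up summands via the swap $f\mapsto\tilde f^{(k)}$, isolates the $\Theta(n^3)$ injective terms, and reads off one linear constraint $C_{k,\ell}-C_{k,\ell+1}-C_{k+1,\ell}+C_{k+1,\ell+1}=0$ at a time. You instead rewrite the gradient exactly as a weighted inner product against differences of order-statistic CDF increments (I checked: linearizing $d(\pi,\mu)$ at $\rho\equiv1$ over $[0,1]^{2k}$, integrating out the non-distinguished variables over simplices, and using $\partial\rho/\partial x_{ij}=nB^{ij}$ does give the stated identity), then extract the $n^{-3}$ asymptotics from the Taylor expansion of $F_r$, reducing the problem to whether the bilinear form $G(x,y)=\sum_{r,s}C_{s,r}\beta_r'(x)\beta_s'(y)$ vanishes identically. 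The identity $\sum_r\beta_r\equiv 4$, the linear independence of $\beta_1',\beta_2',\beta_3'$, and the row/column sums of $C$ then close the argument. The two approaches arrive at equivalent algebraic conditions---your $C_{s,r}-C_{s,4}-C_{4,r}+C_{4,4}=0$ for $r,s\in[3]$ and the paper's vanishing of all $2\times2$ second differences both say $C_{s,r}=u_s+v_r$---and both finish with the same row/column-sum observation. What your approach buys: it makes the structure transparent (the linearization of pattern density at the uniform permuton lives in the span of $\{\beta_r'(x)\beta_s'(y)\}$, a nine-dimensional space once $\sum_r\beta_r'=0$ is used), avoids picking special indices, and yields all constraints at once; what it costs is that the ``clean identity'' must itself be derived carefully, which you flag honestly---the derivation via first variation of the permuton-density integral is the right way to do it and does go through. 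One small point worth noting explicitly when writing this up: $G$ is a polynomial, so ``not identically zero on $[0,1]^2$'' implies it is nonzero at some interior point $(x_0,y_0)\in(0,1)^2$, which is needed so that $\lfloor x_0n\rfloor,\lfloor y_0n\rfloor\in[n-1]$ for large $n$.
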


\begin{proof}
We will assume that the gradient $\nabla h_{S,n}(0,\ldots,0)$ is zero and
establish that the entries of the cover matrix satisfy
$C_{k,\ell}-C_{k+1,\ell}-C_{k,\ell+1}+C_{k+1,\ell+1}=0$ for all $k,\ell\in [3]$.
We will then use this to show that the cover matrix $C$ must be constant.

We start by analyzing the partial derivative $\frac{\partial}{\partial x_{ij}}h_{S,n}(0,\ldots,0)$ for $i=1$ and $j=1$.
Recall the notation $\tilde f^{(k)}$ from the proof of Lemma~\ref{lm:easy}.
If $\lvert\im(f)\cap\{1,2\}\rvert\le 1$ or $\lvert\im(g)\cap\{1,2\}\rvert\le 1$,
then the summands in the expression given in Lemma~\ref{lm:cover}
corresponding to $(f,g)$, $(\tilde f^{(1)},g)$, $(f,\tilde g^{(1)})$ and $(\tilde f^{(1)},\tilde g^{(1)})$ sum to zero.
Hence, we need to focus on the summands where $\{1,2\}\subseteq\im(f)$ and $\{1,2\}\subseteq\im(g)$.
Note the number of summands such that $f$ or $g$ is not injective is $O(n^3)$,
which yields the following.
\begin{align*}
  \frac{\partial}{\partial x_{11}}h_{S,n}(0,\ldots,0) &= 
  \frac{4!}{n^7}\left(\sum_{\substack{f,g:[4]\nearrow [n]\\f(1)=1,f(2)=2,\lvert\im(f)\rvert=4\\g(1)=1,g(2)=2,\lvert\im(g)\rvert=4}}
                            \left(C_{11}-C_{12}-C_{21}+C_{22}\right)+O(n^3)\right) \\
  & = \frac{4!}{n^7}{n-2\choose 2}^2\left(C_{11}-C_{12}-C_{21}+C_{22}\right)+O\left(\frac{1}{n^4}\right).
\end{align*}
If $n$ is sufficiently large, the above expression can be zero only if $C_{11}-C_{12}-C_{21}+C_{22}=0$.
An analogous argument for $i=1$ and $j=n-1$ yields that $C_{13}-C_{14}-C_{23}+C_{24}=0$,
for $i=n-1$ and $j=1$ that $C_{31}-C_{32}-C_{41}+C_{42}=0$, and
for $i=n-1$ and $j=n-1$ that $C_{33}-C_{34}-C_{43}+C_{44}=0$.

We next analyze the partial derivative $\frac{\partial}{\partial x_{ij}}h_{S,n}(0,\ldots,0)$ for $i=1$ and $j=\lfloor n/2\rfloor$.
If $\lvert\im(f)\cap\{1,2\}\rvert\le 1$ or $\lvert\im(g)\cap\{\lfloor n/2\rfloor,\lfloor n/2\rfloor+1\}\rvert\le 1$,
then the summands in the expression given in Lemma~\ref{lm:cover}
corresponding to $(f,g)$, $(\tilde f^{(1)},g)$, $(f,\tilde g^{(\lfloor n/2\rfloor)})$ and $(\tilde f^{(1)},\tilde g^{(\lfloor n/2\rfloor)})$ sum to zero.
Hence, we need to focus on the summands where $\lvert\im(f)\cap\{1,2\}\rvert=2$ and
$\lvert\im(g)\cap\{\lfloor n/2\rfloor,\lfloor n/2\rfloor+1\}\rvert=2$.
Since the number of summands such that $f$ or $g$ is not injective is $O(n^3)$,
we obtain that
\begin{align*}
  \frac{\partial}{\partial x_{1,\lfloor n/2\rfloor}}h_{S,n}(0,\ldots,0)=&\frac{4!}{n^7}\left(
  \sum_{\substack{f,g:[4]\nearrow [n]\\f(1)=1,f(2)=2,\lvert\im(f)\rvert=4\\g(1)=\lfloor n/2\rfloor,g(2)=\lfloor n/2\rfloor+1,\lvert\im(g)\rvert=4}}\left(C_{11}-C_{12}-C_{21}+C_{22}\right)\right.\\
  &+\sum_{\substack{f,g:[4]\nearrow [n]\\f(1)=1,f(2)=2,\lvert\im(f)\rvert=4\\g(2)=\lfloor n/2\rfloor,g(3)=\lfloor n/2\rfloor+1,\lvert\im(g)\rvert=4}}\left(C_{12}-C_{13}-C_{22}+C_{23}\right)\\
  &\left.+\sum_{\substack{f,g:[4]\nearrow [n]\\f(1)=1,f(2)=2,\lvert\im(f)\rvert=4\\g(3)=\lfloor n/2\rfloor,g(4)=\lfloor n/2\rfloor+1,\lvert\im(g)\rvert=4}}\left(C_{13}-C_{14}-C_{23}+C_{24}\right)\right)\\
  &+O\left(\frac{1}{n^4}\right).
\end{align*}
Since the first and the third sum are equal to zero, we obtain that
\[\frac{\partial}{\partial x_{1,\lfloor n/2\rfloor}}h_{S,n}(0,\ldots,0)=\left(C_{12}-C_{13}-C_{22}+C_{23}\right)\cdot\Theta\left(\frac{1}{n^3}\right)+O\left(\frac{1}{n^4}\right).\]
Hence, if $n$ is large enough and this partial derivative is zero, it must hold that $C_{12}-C_{13}-C_{22}+C_{23}=0$.
An analogous argument for $i=\lfloor n/2\rfloor$ and $j=1$ yields that $C_{21}-C_{22}-C_{31}+C_{32}=0$,
for $i=n-1$ and $j=\lfloor n/2\rfloor$ that $C_{32}-C_{33}-C_{42}+C_{43}=0$, and
for $i=\lfloor n/2\rfloor$ and $j=n-1$ that $C_{23}-C_{24}-C_{33}+C_{34}=0$.

Finally, we analyze the partial derivative $\frac{\partial}{\partial x_{ij}}h_{S,n}(0,\ldots,0)$ for $i=j=\lfloor n/2\rfloor$.
As in the preceding two cases, we consider the functions $\tilde f^{(\lfloor n/2\rfloor)}$ and $\tilde g^{(\lfloor n/2\rfloor)}$ to conclude that the summands with $\lvert\im(f)\cap\{\lfloor n/2\rfloor,\lfloor n/2\rfloor+1\}\rvert\le 1$ or
$\lvert\im(g)\cap\{\lfloor n/2\rfloor,\lfloor n/2\rfloor+1\}\rvert\le 1$ sum to zero.
We next express the partial derivative as the sum of nine terms corresponding to injective mappings $f$ and $g$
with $\{\lfloor n/2\rfloor,\lfloor n/2\rfloor+1\}\subseteq\im(f)$ and
$\{\lfloor n/2\rfloor,\lfloor n/2\rfloor+1\}\subseteq\im(g)$ (the terms are determined by the preimages of $\lfloor n/2\rfloor$ and
$\lfloor n/2\rfloor+1$).
Eight of these terms correspond to the sums of the entries of the cover matrix that we have already shown to be zero,
which leads to the following expression for the considered partial derivative:
\[\frac{\partial}{\partial x_{\lfloor n/2\rfloor,\lfloor n/2\rfloor}}h_{S,n}(0,\ldots,0)=\left(C_{22}-C_{23}-C_{32}+C_{33}\right)\cdot\Theta\left(\frac{1}{n^3}\right)+O\left(\frac{1}{n^4}\right).\]
Hence, if $n$ is large enough and the partial derivative is zero, it must hold that $C_{22}-C_{23}-C_{32}+C_{33}=0$.

Since the cover matrix $C$ satisfies that $C_{k,\ell}-C_{k,\ell+1}-C_{k+1,\ell}+C_{k+1,\ell+1}=0$ for all $k,\ell\in [3]$, $C$ is of the form
\[C=\begin{pmatrix}
    a & b & c & d \\
    e & b+e-a & c+e-a & d+e-a \\
    f & b+f-a & c+f-a & d+f-a \\
    g & b+g-a & c+g-a & d+g-a \\
    \end{pmatrix}\]
for some integers $a,\ldots,g$. Since $C$ is a cover matrix for a set $S$ of $4$-permutations, each row and each column must sum to $\lvert S\rvert$,
i.e., the sums of the entries of each row are equal and the same holds for the columns of $C$.
It follows that $b=c=d$ and $e=f=g$, so
\[C=\begin{pmatrix}
    a & b & b & b \\
    e & b+e-a & b+e-a & b+e-a \\
    e & b+e-a & b+e-a & b+e-a \\
    e & b+e-a & b+e-a & b+e-a \\
    \end{pmatrix}.\]
It now follows that $b=e$ (otherwise, the sum of the second row and the second column would differ),
which yields that the matrix $C$ must be of the form
\[C=\begin{pmatrix}
    a & b & b & b \\
    b & 2b-a & 2b-a & 2b-a \\
    b & 2b-a & 2b-a & 2b-a \\
    b & 2b-a & 2b-a & 2b-a \\
    \end{pmatrix}.\]
Hence, we get that $a+3b=7b-3a$, which yields that $a=b$.
We conclude that the matrix $C$ is constant.
\end{proof}

The following lemma will be used to analyze sets of $4$-permutations with constant cover matrix.

\begin{lemma}
\label{lm:second}
Let $S$ be a set of $4$-permutations such that the cover matrix $C$ is constant.
The Hessian matrix of the second order partial derivatives of $h_{S,5}$ at $(0,\ldots,0)$
has both a positive and a negative eigenvalue,
unless $S$ is symmetric to one of the following sets of $4$-permutations
\begin{itemize}
\item $\{1234,2143,3412,4321\}$,
\item $\{1234,1243,2134,2143,3412,3421,4312,4321\}$,
\item $\{1234,1432,2143,2341,3214,3412,4123,4321\}$,
\item $\{1324,1342,2413,2431,3124,3142,4213,4231\}$,
\item $\{1342,1423,2314,2431,3124,3241,4132,4213\}$,
\item $\{1234,1243,1324,2134,2143,2413,3142,3412,3421,4231,4312,4321\}$,
\item $\{1234,1243,1342,2134,2143,2431,3124,3412,3421,4213,4312,4321\}$,
\item $\{1234,1243,1342,2134,2143,2431,3214,3412,3421,4123,4312,4321\}$,
\item $\{1234,1243,1432,2134,2143,2341,3214,3412,3421,4123,4312,4321\}$,
\item $\{1234,1243,1432,2134,2341,2413,3142,3214,3421,4123,4312,4321\}$,
\item $\{1234,1243,1432,2143,2314,2341,3214,3412,3421,4123,4132,4321\}$,
\item $\{1234,1342,1423,2143,2314,2431,3124,3241,3412,4132,4213,4321\}$,
\item $\{1234,1342,1423,2314,2413,2431,3124,3142,3241,4132,4213,4321\}$, or
\end{itemize}
to the complement of one of them.
\end{lemma}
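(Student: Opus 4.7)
The plan is to convert each entry of the Hessian of $h_{S,5}$ at the origin into an explicit rational number via the density formula of Lemma~\ref{lm:dens}. Writing $M(\vec x)=A+\sum_{i,j\in[4]}x_{ij}B^{ij}$, where $A$ has all entries $1/5$, and expanding the product $\prod_{i\in[4]}M(\vec x)_{f(i),g(\pi(i))}$ to second order in $\vec x$, for each pair of indices $(ij),(i'j')$ one obtains a closed form for $\frac{\partial^2}{\partial x_{ij}\partial x_{i'j'}}h_{\pi,5}(\vec 0)$ as a finite sum over non-decreasing maps $f,g\colon[4]\to[5]$, weighted by entries of $B^{ij}$ and $B^{i'j'}$. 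Summing these over $\pi\in S$ yields a $16\times16$ rational matrix $H(S)$, and the task is to determine for which constant-cover-matrix sets $S$ this matrix is indefinite.

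Before computing, I would reduce the set of candidates. A constant cover matrix $C=(|S|/4)\cdot J$ forces $|S|\in\{0,4,8,12,16,20,24\}$, so one may focus on $|S|\in\{4,8,12\}$ by the obvious complement argument: replacing $S$ by $S_4\setminus S$ affects $h_{S,5}$ only through the total density $\sum_{\pi\in S_4}d(\pi,\mu[\,\cdot\,])\equiv1$, whose Hessian vanishes identically. Next, the eight-fold symmetry group of $S_4$ described in Section~\ref{sec:notation} acts on subsets of $S_4$, preserves both the cover-matrix condition and, up to a relabelling of the coordinates $x_{ij}$, the function $h_{S,5}$; in particular the signature of $H(S)$ is an invariant of the orbit. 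Enumerating all subsets of $S_4$ with constant cover matrix and grouping them by this action reduces the problem to a finite, tractable list of orbit representatives.

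For each representative I would decide indefiniteness of $H(S)$ by explicit linear algebra. For every representative not appearing in the statement of the lemma, I would exhibit a pair of vectors $v^+,v^-\in\RR^{16}$ with $(v^+)^{T}H(S)v^{+}>0>(v^-)^{T}H(S)v^{-}$; such vectors can be found, for instance, among the standard basis vectors and simple combinations thereof since the entries of $H(S)$ have small denominators. For the thirteen representatives (and their complements) listed in the conclusion, I would instead compute the eigenvalues of $H(S)$ explicitly and verify semidefiniteness. I expect the main obstacle to be bookkeeping rather than mathematical depth: the list of subsets of $S_4$ with constant cover matrix is large (several hundred before quotienting by symmetry), and although each individual Hessian entry and each signature verification is a routine rational computation, organising the data and presenting a complete case analysis is delicate; this is presumably why the authors relegate the underlying tables to the supplementary appendices of the paper.
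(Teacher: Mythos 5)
Your proposal takes essentially the same route as the paper's own proof: compute the $16\times16$ Hessians $H_\pi$ from the density formula of Lemma~\ref{lm:dens}, exploit $H_S=-H_{S_4\setminus S}$ (the Hessian of the constant $\sum_\pi d(\pi,\cdot)\equiv 1$ vanishing) together with the eight-fold symmetry group to reduce to finitely many orbit representatives, and then decide indefiniteness of each $H_S$ by explicit linear algebra, with the bulk of the bookkeeping deferred to tables. The only cosmetic difference is that you certify indefiniteness by exhibiting sign-definite test vectors rather than by reading off the extreme eigenvalues as the paper's appendices do; both are sound certificates.
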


\begin{proof}
For a $4$-permutation $\pi$, let $H_{\pi}$ be the Hessian matrix (of order sixteen)
\[\left(\frac{\partial^2}{\partial x_{ij}\partial x_{i'j'}}h_{\{\pi\},5}(0,\ldots,0)\right)_{i,j,i',j'\in [4]}.\]
The matrices $H_{\pi}$ for all $4$-permutations can be found in Appendix 1.
For a set $S$ of $4$-permutations, let $H_S$ be the corresponding Hessian matrix,
i.e.,
\[H_S=\sum_{\pi\in S}H_{\pi}.\]
Note that $H_S=-H_{\overline{S}}$ where $\overline{S}$ is the complement of $S$ with respect to the set of all $4$-permutations.
If the cover matrix of $S$ is constant, then $|S|$ must be divisible by four.
Up to symmetry, there are $12$ sets $S$ with $4$ elements and $65$ sets $S$ with $8$ elements whose cover matrix is constant.
Up to symmetry and taking complements, there are $68$ sets $S$ with $12$ elements whose cover matrix is constant.
These sets are listed in Appendices 2--4 together with the corresponding matrices $H_S$ and
their largest and smallest eigenvalues.
An inspection of these values yields the statement of the lemma (the sets $S$ such that the matrix $H_S$ does not have
both positive and negative eigenvalues are highlighted by the bold font in Appendices 2--4).
\end{proof}

We are now ready to prove the main theorem of this section.

\begin{theorem}
\label{thm:perturbe}
Let $S$ be a set of $4$-permutations.
There exists an integer $n$ and
$\vec x,\vec y\in U_n$ such that $h_{S,n}(\vec x)<\lvert S\rvert/24<h_{S,n}(\vec y)$,
unless $S$ is symmetric to one of the sets of $4$-permutations listed in Lemma~\ref{lm:second}, or
to the complement of one of them.
\end{theorem}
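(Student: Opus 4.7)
The plan is to reduce to the two lemmas already proved in this section by splitting on whether the cover matrix of $S$ is constant. Observe first that $h_{S,n}(0,\ldots,0)=\lvert S\rvert/24$ for every $n$, since $\mu[A]$ with $A$ the uniform doubly stochastic matrix of order $n$ is the uniform permuton and hence assigns density $1/24$ to each $4$-permutation. So our task reduces to showing that, under the non-exceptional hypothesis on $S$, the function $h_{S,n}$ is not extremal at the origin for some $n$.

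\emph{Case 1: the cover matrix of $S$ is not constant.} By Lemma~\ref{lm:large}, there exists an integer $n$ such that the gradient $\vec v:=\nabla h_{S,n}(0,\ldots,0)$ is non-zero. Since $h_{S,n}$ is a polynomial (and therefore smooth) on $U_n$, the first-order Taylor expansion gives
\[h_{S,n}(t\vec v)=\frac{\lvert S\rvert}{24}+t\lVert\vec v\rVert^2+O(t^2)\]
as $t\to 0$. Choosing $t>0$ small enough that $t\vec v\in U_n$ and the $O(t^2)$ error is dominated by $t\lVert\vec v\rVert^2$, we obtain $\vec y:=t\vec v\in U_n$ with $h_{S,n}(\vec y)>\lvert S\rvert/24$; taking $\vec x:=-t\vec v$ gives strict inequality in the other direction.

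\emph{Case 2: the cover matrix of $S$ is constant.} By Lemma~\ref{lm:easy}, the gradient $\nabla h_{S,n}(0,\ldots,0)$ is zero for every $n$, so we must work with the second-order term. We specialize to $n=5$ and appeal to Lemma~\ref{lm:second}: since $S$ is not symmetric to any of the listed sets nor to the complement of one, the Hessian $H_S$ at the origin has both a positive eigenvalue $\lambda_+$ and a negative eigenvalue $\lambda_-$, with corresponding eigenvectors $\vec v_+$ and $\vec v_-$. The second-order Taylor expansion in the eigendirections reads
\[h_{S,5}(t\vec v_{\pm})=\frac{\lvert S\rvert}{24}+\frac{t^2}{2}\lambda_{\pm}\lVert\vec v_{\pm}\rVert^2+O(t^3),\]
so for $t>0$ small enough that $t\vec v_{\pm}\in U_5$ and the cubic error is absorbed, we set $\vec y:=t\vec v_+$ and $\vec x:=t\vec v_-$ to get $h_{S,5}(\vec x)<\lvert S\rvert/24<h_{S,5}(\vec y)$.

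The main (mild) obstacle is bookkeeping: verifying that sufficiently small perturbations of the uniform doubly stochastic matrix along any chosen direction remain inside the cube $U_n$ on which $h_{S,n}$ is defined, and that the Taylor remainder is genuinely negligible. Both are routine because $h_{S,n}$ is a polynomial of fixed degree in finitely many variables, $U_n$ is a neighborhood of the origin, and Lemmas~\ref{lm:large} and~\ref{lm:second} supply the non-vanishing linear respectively indefinite quadratic data needed to move off the value $\lvert S\rvert/24$ in both directions.
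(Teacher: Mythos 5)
Your proof is correct and follows essentially the same route as the paper: split on whether the cover matrix is constant, apply Lemma~\ref{lm:large} in the non-constant case to move along $\pm\nabla h_{S,n}(0,\ldots,0)$, and apply Lemmas~\ref{lm:easy} and~\ref{lm:second} in the constant case to move along eigenvectors of the indefinite Hessian $H_S$ at $n=5$. The Taylor-expansion bookkeeping you spell out is precisely what the paper leaves implicit in its phrase ``for a sufficiently small positive $\varepsilon$''.
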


\begin{proof}
If the cover matrix $C$ of $S$ is not constant,
then there exists an integer $n$ such that the gradient $\nabla h_{S,n}(0,\ldots,0)$ is non-zero by Lemma~\ref{lm:large}.
Hence, we can set $\vec x=-\varepsilon \nabla h_{S,n}(0,\ldots,0)$ and $\vec y=\varepsilon \nabla h_{S,n}(0,\ldots,0)$
for a sufficiently small positive $\varepsilon$.
If the cover matrix $C$ of $S$ is constant, then $\nabla h_{S,n}(0,\ldots,0)$ is zero for every integer $n$ by Lemma~\ref{lm:easy},
in particular, for $n=5$.
However, unless $S$ is symmetric to one of the sets of $4$-permutations listed in Lemma~\ref{lm:second} or
to the complement of one of them,
the Hessian matrix of the second partial derivatives of $h_{S,5}$ at $(0,\ldots,0)$ has both positive and negative eigenvalues.
Hence,
we can set $\vec x$ to be an $\varepsilon$-multiple of the eigenvector corresponding to a negative eigenvalue of the Hessian matrix and
$\vec y$ to be an $\varepsilon$-multiple of the eigenvector corresponding to a positive eigenvalue for a sufficiently small positive $\varepsilon$.
\end{proof}

\section{Non-$\Sigma$-forcing sets}
\label{sec:examples}

We start this section with a lemma which asserts that
in order to show that a set $S$ of $4$-permutations is not $\Sigma$-forcing,
it is enough to find a permuton where the sum of pattern densities is smaller than $\lvert S\rvert/24$, and
a permuton where the sum of pattern densities is larger than $\lvert S\rvert/24$.

\begin{lemma}
\label{lm:examples}
Let $S$ be a set of $4$-permutations.
If there exist permutons $\mu_1$ and $\mu_2$ such that
\[\sum_{\pi\in S}d(\pi,\mu_1)<\frac{\lvert S\rvert}{24}\mbox{ and }\sum_{\pi\in S}d(\pi,\mu_2)>\frac{\lvert S\rvert}{24},\]
then there exists a non-uniform permuton $\mu$ such that
\[\sum_{\pi\in S}d(\pi,\mu)=\frac{\lvert S\rvert}{24}.\]
\end{lemma}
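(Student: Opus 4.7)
The natural approach is an intermediate value argument on the straight-line interpolation $\mu_t := (1-t)\mu_1 + t\mu_2$ for $t \in [0,1]$. Since both $\mu_1$ and $\mu_2$ have uniform marginals, so does every convex combination, hence each $\mu_t$ is indeed a permuton. The map $t \mapsto d(\pi, \mu_t)$ is a polynomial in $t$ of degree at most $|\pi|=4$ (obtained by expanding the $4$-fold product measure used to sample a $\mu_t$-random $4$-permutation), so the sum $f(t) := \sum_{\pi \in S} d(\pi, \mu_t)$ is continuous on $[0,1]$ and satisfies $f(0) < |S|/24 < f(1)$ by hypothesis. The intermediate value theorem then produces some $t^* \in (0,1)$ with $f(t^*) = |S|/24$. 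If the corresponding permuton $\mu_{t^*}$ is non-uniform, I would simply take $\mu := \mu_{t^*}$ and be done.

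The main obstacle is the degenerate case in which $\mu_{t^*}$ equals the uniform permuton $\mu_u$, i.e., the affine line through $\mu_1$ and $\mu_2$ happens to pass through $\mu_u$. To handle this, the plan is to pick any permuton $\mu_3$ not lying on the affine line $L$ through $\mu_1$ and $\mu_2$ (such a $\mu_3$ exists because the set of permutons is not contained in any one-dimensional affine subspace, as witnessed for example by a suitable collection of distinct step permutons), and to replace $\mu_2$ by $\mu_2^\varepsilon := (1-\varepsilon)\mu_2 + \varepsilon \mu_3$ for a small $\varepsilon > 0$. By continuity of $f$, the value of $f$ at $\mu_2^\varepsilon$ remains strictly above $|S|/24$ for all sufficiently small $\varepsilon$, while the new line $L'$ from $\mu_1$ to $\mu_2^\varepsilon$ is distinct from $L$. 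Two distinct affine lines sharing the point $\mu_1$ meet only at $\mu_1$, and $\mu_1 \ne \mu_u$ (since $f(\mu_1) < |S|/24$ whereas $f(\mu_u) = |S|/24$). Hence $\mu_u \notin L'$, and a second application of the intermediate value theorem along $L'$ produces a non-uniform permuton $\mu$ with $\sum_{\pi \in S} d(\pi,\mu) = |S|/24$.

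The principal technical point is thus not the intermediate value application itself but rather ensuring that the crossing can be arranged to avoid $\mu_u$; the perturbation above accomplishes exactly this by exploiting the abundance of permutons to tilt one endpoint off the exceptional line. I expect the write-up to be short, with the verification that $\mu_t$ is a permuton and that $d(\pi,\mu_t)$ is continuous in $t$ being immediate from the definitions, and the only non-trivial ingredient being the existence of an off-line permuton $\mu_3$, which follows from the infinite-dimensional nature of the space of permutons.
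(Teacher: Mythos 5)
Your argument is correct but follows a genuinely different route from the paper. The paper interpolates between $\mu_1$ and $\mu_2$ by a block-diagonal construction: for $\lambda\in(1,2)$, the permuton $\mu_\lambda$ places a $(2-\lambda)$-scaled copy of $\mu_1$ in the corner square $[0,2-\lambda]^2$ and a $(\lambda-1)$-scaled copy of $\mu_2$ in the opposite corner square $[2-\lambda,1]^2$. The key payoff is that for every $\lambda\in(1,2)$ the support of $\mu_\lambda$ is confined to those two squares, hence is a proper subset of $[0,1]^2$, so $\mu_\lambda$ is automatically non-uniform; the intermediate value theorem then finishes the proof with no degenerate case to worry about. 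Your convex interpolation $\mu_t=(1-t)\mu_1+t\mu_2$ is simpler to set up (it is obviously a permuton, and $d(\pi,\mu_t)$ is clearly a degree-$4$ polynomial in $t$), but it pays for that simplicity with the possibility that the affine segment passes through the uniform permuton, which you then resolve by tilting one endpoint toward an off-line permuton $\mu_3$ and rerunning the intermediate value argument on the new segment. Your handling of that case is correct: the new line meets the old one only at $\mu_1$, and $\mu_1\ne\mu_u$ since $f(\mu_1)<\lvert S\rvert/24=f(\mu_u)$, so the uniform permuton cannot lie on the perturbed segment. In short, both proofs work; the paper's construction trades a slightly more elaborate interpolation for immunity to the degeneracy, whereas yours trades a simpler interpolation for an extra perturbation step.
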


\begin{figure}
\begin{center}
\epsfbox{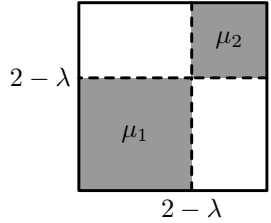}
\end{center}
\caption{The permuton $\mu_\lambda$ in the proof of Lemma~\ref{lm:examples}. The support of the permuton lies in the gray area.}
\label{fig:lambda}
\end{figure}

\begin{proof}
Define a permuton $\mu_\lambda$ for $\lambda\in (1,2)$ as follows:
\begin{align*}
\mu_\lambda(X) & =(2-\lambda)\cdot\mu_1\left(\frac{1}{2-\lambda}\times\left(X\cap [0,2-\lambda]^2\right)\right)\\
               & +(\lambda-1)\cdot\mu_2\left(\frac{1}{\lambda-1}\times\left(X\cap [2-\lambda,1]^2-(2-\lambda,2-\lambda)\right)\right),
\end{align*}	       
where $\alpha\times X$ stands for $\{\alpha\cdot x, x\in X\}$ and $X-v$ for $\{x-v,x\in X\}$.
The definition of a permuton $\mu_\lambda$ is illustrated in Figure~\ref{fig:lambda}.
Note that $\mu_\lambda$ is $\mu_1$ for $\lambda=1$ and $\mu_2$ for $\lambda=2$.
Next define a function $f:[1,2]\to[0,1]$ as
\[f(\lambda)=\sum_{\pi\in S}d(\pi,\mu_\lambda).\]
Observe that $f$ is a continuous function on the interval $[1,2]$.
Hence, there exists $\lambda\in (1,2)$ such that $f(\lambda)=\lvert S\rvert/24$.
Since the permuton $\mu_\lambda$ is not uniform for any $\lambda\in(1,2)$, the statement of the lemma follows.
\end{proof}

We are now ready to prove the main theorem of this section.

\begin{theorem}
\label{thm:examples}
Let $S$ be a set of $4$-permutations. Then there exists a non-uniform permuton $\mu$ such that
\[\sum_{\pi\in S}d(\pi,\mu)=\frac{\lvert S\rvert}{24}\]
unless the set $S$ is one of the following sets of $4$-permutations
\begin{itemize}
\item $\{1234,1243,2134,2143,3412,3421,4312,4321\}$,
\item $\{1234,1432,2143,2341,3214,3412,4123,4321\}$,
\item $\{1324,1342,2413,2431,3124,3142,4213,4231\}$,
\item $\{1324,1423,2314,2413,3142,3241,4132,4231\}$,
\item $\{1234,1243,1432,2134,2143,2341,3214,3412,3421,4123,4312,4321\}$, or
\end{itemize}
the complement of one of them.
\end{theorem}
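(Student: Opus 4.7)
The plan is a case analysis based on whether $S$ lies within the (finite) scope of Lemma~\ref{lm:second}.

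In the generic case, $S$ is not symmetric (nor complementary) to any of the thirteen sets listed in Lemma~\ref{lm:second}. Then Theorem~\ref{thm:perturbe} directly provides an integer $n$ and vectors $\vec x, \vec y \in U_n$ such that the perturbed step permutons $\mu[A + \sum x_{ij} B^{ij}]$ and $\mu[A + \sum y_{ij} B^{ij}]$ satisfy $h_{S,n}(\vec x) < |S|/24 < h_{S,n}(\vec y)$. Applying Lemma~\ref{lm:examples} produces the required non-uniform permuton.

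In the exceptional case, $S$ is symmetric or complementary to one of the thirteen sets from Lemma~\ref{lm:second} but is not among the five sets listed in Theorem~\ref{thm:examples}. Density sums are preserved under the joint action of the eight-element symmetry group of $S_4$ on $S$ and $\mu$, and the existence of a non-uniform witness for $S$ is equivalent to the analogous existence for $\overline{S}$. It therefore suffices to exhibit, for each exceptional equivalence class, a single pair of permutons with density sums strictly below and strictly above $|S|/24$, and then apply Lemma~\ref{lm:examples}. The exceptions comprise the size-four set $\{1234, 2143, 3412, 4321\}$; possibly one size-eight set (the fifth set of Lemma~\ref{lm:second}, whose $S_4$-orbit I would first check is genuinely inequivalent to the Theorem's set~$D$ by inspecting inverses and reflections of a few elements); and seven of the eight size-twelve classes from Lemma~\ref{lm:second}.

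The main obstacle is constructing these witness permutons. Since Lemma~\ref{lm:second} asserts that the Hessian of $h_{S,5}$ at the origin is sign-definite for each exceptional $S$, perturbations of the uniform step permuton of order five alone do not suffice to straddle $|S|/24$ — a witness on the ``wrong'' side of the Hessian must come from a larger step permuton, a non-uniform base permuton, or a qualitatively different construction such as a block-diagonal step permuton of order two or three, or a convex combination of the uniform permuton with the identity or anti-identity permuton. For each exceptional $S$ the strategy is to parametrise a short family of candidate permutons, use Lemma~\ref{lm:dens} to express the density sum as an explicit polynomial in the parameters, and verify by direct computation that the sum takes values on both sides of $|S|/24$. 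These case-by-case verifications are naturally relegated to a supplementary appendix, while the structural two-case reduction above appears in the main text.
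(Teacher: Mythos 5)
Your proposal follows the paper's proof essentially verbatim: reduce to $|S|\le 12$ and to symmetry representatives, dispatch the generic case via Theorem~\ref{thm:perturbe}, and for the nine remaining exceptional classes exhibit two explicit permutons straddling $|S|/24$ so that Lemma~\ref{lm:examples} applies, with the explicit computations deferred to an appendix. The only detail you leave implicit is the paper's economical choice of one witness --- the monotone increasing permuton, whose density sum is $1$ or $0$ according to whether $1234\in S$, covering one side for free in all nine exceptional cases (eight of which contain $1234$) --- so that only a single small step permuton per case needs to be constructed explicitly.
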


\begin{proof}
Fix a set $S$ of $4$-permutations that is not one of the sets listed in the statement of the lemma.
We can assume that $\lvert S\rvert\le 12$ by considering the complement of $S$ if necessary.
By Lemma~\ref{lm:examples}, it suffices to find permutons $\mu_1$ and $\mu_2$ such that
the sum of the pattern densities of the permutations contained in $S$ for $\mu_1$
is less than $\lvert S\rvert/24$ and
for $\mu_2$ is larger than $\lvert S\rvert/24$.
If $S$ is not symmetric to a set listed in the statement of Lemma~\ref{lm:second},
such permutons $\mu_1$ and $\mu_2$ exist by Theorem~\ref{thm:perturbe}.
Hence, we can assume that $S$ is one of the $9$ sets listed in the statement of Lemma~\ref{lm:second} (out of the $13$ sets listed in total) that 
are not symmetric to one of the five sets given in the statement of Theorem~\ref{thm:examples} (note that
the third and fourth sets listed in the statement of the theorem are symmetric).

We first consider the case $S=\{1342,1423,2314,2431,3124,3241,4132,4213\}$.
We choose $\mu_1$ to be the \emph{monotone increasing permuton},
i.e., the unique permuton such that $\supp{\mu_1}=\{(x,x),x\in [0,1]\}$.
The density of a pattern $\pi$ in $\mu_1$ is $1$ if $\pi$ is increasing and $0$ otherwise;
in particular, the sum of the pattern densities of the permutations from $S$ is zero.
Next, consider the following doubly stochastic matrix $A$
\[A=\begin{pmatrix}
    0 & 0 & 0 & 1 & 0 & 0 \\
    0 & 0 & 0 & 0 & 1 & 0 \\
    1 & 0 & 0 & 0 & 0 & 0 \\
    0 & 0 & 1 & 0 & 0 & 0 \\
    0 & 0 & 0 & 0 & 0 & 1 \\
    0 & 1 & 0 & 0 & 0 & 0 \\
    \end{pmatrix},\]
and set $\mu_2=\mu[A]$.
A direct computation yields that the sum of the pattern densities of the permutation contained in $S$ in $\mu_2$
is $\frac{25}{72}>\frac{1}{3}$.

Each of the eight sets $S$ that remain to be considered contain the permutation $1234$.
Hence, we set $\mu_2$ to be the monotone increasing permuton.
The permutons $\mu_1$ for these sets can be chosen as step permutons corresponding
to doubly stochastic matrices listed in Appendix 5.
\end{proof}

\section*{Acknowledgements}

The last author would like to thank Hays Whitlatch for bringing the results of Eric Zhang to his attention.

\bibliographystyle{bibstyle}
\bibliography{qperm8}
\end{document}